  \newskip\prethm \prethm3.0pt plus1.3pt minus.4pt
  \newskip\posthm \posthm2.7pt plus1.4pt minus.3pt
  \newtheoremstyle{STATEMENT}%
       {\prethm}{\posthm}{\itshape}{\parindent}{\scshape}
       {.}{.6em plus.2em minus.1em}{}
  \newtheoremstyle{EXPLANATION}%
       {\prethm}{\posthm}{}{\parindent}{\scshape}
       {.}{.6em plus.2em minus.1em}{}
\theoremstyle{STATEMENT}
\newtheorem{theorem}{Theorem}
\newtheorem{proposition}{Proposition}
\newtheorem{corollary}{Corollary}
\theoremstyle{EXPLANATION}
\newtheorem{definition}{Definition}
\newtheorem{remark}{Remark}
\newcommand{\R}{\mathbb R}
\newcommand{\Z}{\mathbb Z}
\newcommand{\tr}{\mathrm{tr}\,}
\newcommand{\Tr}{\mathrm{tr}\,}
\newcommand{\so}{\mathrm{so}}
\newcommand{\gl}{\mathrm{gl}}
\newcommand{\uu}{\mathrm{u}}
\newcommand{\goth}{\mathfrak}
\newcommand{\SO}{\mathrm{SO}}
\newcommand{\GL}{\mathrm{GL}}
\newcommand{\OO}{\mathrm{SO}}
\newcommand{\Sym}{\mathrm{Sym}}
\title{Argument shift method and sectional operators: applications to differential geometry}
\author{Alexey Bolsinov\\
School of Mathematics,
 Loughborough University,
 LE11 3TU, UK \\
Moscow State University, Moscow, Russia}
\begin{document}
\maketitle

\section*{What is this paper about?}

This text does not contain any new results, it is just an attempt to present, in a systematic way, one  construction which 
makes it possible to use some ideas and notions well-known in the theory of integrable systems on Lie algebras to a rather different area of mathematics related to the study of projectively equivalent  Riemannian and pseudo-Riemannian metrics.
The main observation can be formulated, yet without going into details, as follows:

\medskip

\noindent {\it The curvature tensors of projectively equivalent metrics coincide with the Hamiltonians of  multi-dimensional rigid bodies.}

\medskip

\noindent  Such a relationship seems to be quite interesting and may apparently have further applications in differential geometry.  The wish to talk about  this relation itself  (rather than some new results) was one of motivations for this paper.

The other motivation was to draw reader's attention to  the {\it argument shift method} developed by A.\,S.~Mischenko ana A.\,T.~Fomenko \cite{MF1}  as a generalisation of  S.\,V.~Manakov's construction \cite{Manakov}.  This method is, in my opinion,  a very simple, natural and universal construction which, due to its simplicity, naturality and universality,  occurs in different ares of modern mathematics. There are only a few  constructions in mathematics of this kind. In this paper, the argument shift method is only briefly mentioned but
the main subject, the so-called sectional operators, is directly related to it.

We discuss some new results obtained in our three papers
 \cite{BolsMatvKios, bolstsonev, BMR}.
In this sense, the present work can be considered as a review,  but I would like to shift the accent from the results to the way how using  algebraic properties of sectional operators helps in solving geometric problems.  That is why the exposition is essentially different from the above mentioned papers and details of the proofs,  which are not directly related to our main subjects, are omitted. Two first sections are devoted to the definition and properties of sectional operators,  in the following four, we discuss their applications  in geometry.   Also I would like to especially mention the note   \cite{BolsKonyaev},   in which we discussed the properties of sectional operators in a very general setting and which was conceptually very helpful for this paper.  I am very grateful to all of my coauthors,  Vladimir Matveev,  Volodymyr Kiosak,  Dragomir Tsonev,  Stefan Roseman and Andrey Konyaev. 
 
I wish to express special thanks to my teacher,  Anatoly Timofeevich Fomenko, without whom this work would never appear.

\section{Sectional operators on semisimple Lie algebras}\label{sect1}

We start with a brief  overview on (one special type of)  integrable Euler equations on semisimple Lie algebras (more details on this subject can be found in \cite{bogo, BolsBorisov, fomtrofbook,  fomsympl, marsden, MF1, pere,  reyman, troffomfact}).

Let  $\goth g$ be a semisimple Lie algebra, $R: \goth g \to \goth g$  an operator  symmetric with respect to the Killing form $\langle~,~\rangle$ on $\goth g$. The differential equation
\begin{equation}
\dot x = [R(x),x], \quad x\in\goth g,
\label{Euler}
\end{equation}
is Hamiltonian on $\goth g$ with respect to the standard Lie-Poisson structure  and is called the {\it Euler equation} related to the Hamiltonian function $H(x)=\frac{1}{2}\langle R(x), x\rangle$.

A classical, interesting  and extremely difficult problem is to find those operators
$R: \goth g \to \goth g$ for which the system (\ref{Euler}) is completely integrable.

One of such operators was discovered by S.\,Manakov in \cite{Manakov} and his idea then led to an elegant general construction developed by A.\,Mischenko and A.\,Fomenko \cite{MF1},  called the {\it argument shift method} and having many remarkable applications. In brief this construction for semisimple Lie algebras can be presented as follows.

Assume that $R : \goth g \to \goth g$ satisfies the following identity
\begin{equation}
[R(x),a]=[x,b],   \quad x\in \goth g,
\label{sectional}
\end{equation}
for some fixed $a,b\in \goth g$,  $a\ne 0$.   Then the following statement holds

\begin{theorem}[A.Mischenko, A.Fomenko \cite{MF1}]  \label{MFth}
Let $R: \goth g \to \goth g$ be symmetric and satisfy {\rm (\ref{sectional})}. Then
\begin{itemize}
\item the system {\rm (\ref{Euler})}
admits the following Lax representation with a parameter:
$$
 \frac{d}{dt}(x + \lambda a) = [ R(x)+\lambda b, x+\lambda a];
$$

\item the functions  $f(x+\lambda a)$, where $f:\goth g\to \R$ is an invariant of the adjoint representation,  are first integrals of {\rm (\ref{Euler})}  for any $\lambda\in \R$ and, moreover, these integrals commute;

\item if $a\in \goth g$ is  regular, then {\rm (\ref{Euler})} is completely integrable.
\end{itemize}
\end{theorem}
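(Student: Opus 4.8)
The plan is to treat the three assertions in turn, reducing everything to two structural facts about a semisimple Lie algebra: the invariance of the Killing form, $\langle[u,v],w\rangle=\langle u,[v,w]\rangle$, and the fact that an invariant $f$ of the adjoint representation has gradient $\nabla f(x)$ (defined by $\langle\nabla f(x),\xi\rangle=\frac{d}{dt}\big|_{0}f(x+t\xi)$) lying in the centraliser of $x$, i.e. $[x,\nabla f(x)]=0$. For the Lax representation I would simply expand
\[
[R(x)+\lambda b,\,x+\lambda a]=[R(x),x]+\lambda\big([R(x),a]+[b,x]\big)+\lambda^{2}[b,a].
\]
The coefficient of $\lambda^{0}$ reproduces (\ref{Euler}) and the coefficient of $\lambda^{1}$ vanishes by the sectional identity (\ref{sectional}). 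The only point needing an argument is that the $\lambda^{2}$-term $[b,a]$ vanishes, and this is forced by symmetry: pairing (\ref{sectional}) with an arbitrary $y$ and applying invariance on both sides gives $\langle x,R([a,y])\rangle=\langle x,[b,y]\rangle$ for all $x$, whence $R([a,y])=[b,y]$; taking $y=a$ and using linearity of $R$ (so $R(0)=0$) yields $[b,a]=0$. As $a,b$ are constant, the left-hand side equals $\dot x$ and the Lax equation follows.

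For the second assertion I would first note that the shifts are first integrals directly from the Lax form: for any invariant $f$,
\[
\tfrac{d}{dt}f(x+\lambda a)=\langle\nabla f(x+\lambda a),[R(x)+\lambda b,\,x+\lambda a]\rangle=\langle[\,x+\lambda a,\nabla f(x+\lambda a)\,],R(x)+\lambda b\rangle=0,
\]
since $\nabla f(x+\lambda a)$ centralises $x+\lambda a$. For involution I would compute the Lie--Poisson bracket $\{F,G\}(x)=\langle x,[\nabla F(x),\nabla G(x)]\rangle$ of $F(x)=f(x+\lambda a)$ and $G(x)=g(x+\mu a)$. Writing $p=\nabla f(x+\lambda a)$ and $q=\nabla g(x+\mu a)$, so that $[x+\lambda a,p]=0$ and $[x+\mu a,q]=0$, I split $x=(x+\lambda a)-\lambda a$ to get $\langle x,[p,q]\rangle=-\lambda\langle a,[p,q]\rangle$, and symmetrically $\langle x,[p,q]\rangle=-\mu\langle a,[p,q]\rangle$. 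Hence $(\lambda-\mu)\langle a,[p,q]\rangle=0$, so the bracket vanishes whenever $\lambda\neq\mu$; the diagonal case $\lambda=\mu$ follows because the bracket is polynomial in $(\lambda,\mu)$ and vanishes on a dense set.

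The third assertion, complete integrability for regular $a$, is where the real work lies: the involutive family is in hand, but one must show it contains enough functionally independent functions, namely $\tfrac12(\dim\goth g+\rk\goth g)$ of them (the $\rk\goth g$ Casimirs plus half the dimension of a generic adjoint orbit). I would set up the bookkeeping first. Taking basic invariants $f_{1},\dots,f_{r}$ ($r=\rk\goth g$), homogeneous of degrees $m_{1},\dots,m_{r}$, each shift expands as $f_{i}(x+\lambda a)=\sum_{k}\lambda^{k}f_{i,k}(x)$, where $f_{i,0}=f_{i}$ and the top coefficient $f_{i,m_{i}}=f_{i}(a)$ is constant; the useful, non-constant coefficients are thus $f_{i,k}$ with $0\le k\le m_{i}-1$, a total of $\sum_{i}m_{i}$ functions. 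By the classical identity $\sum_{i}(m_{i}-1)=\tfrac12(\dim\goth g-\rk\goth g)$ (the number of positive roots) this equals exactly $\tfrac12(\dim\goth g+\rk\goth g)$. So the count is already sharp and everything reduces to linear independence of the differentials $df_{i,k}(x)$ at a generic $x$.

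This independence is the main obstacle. The cleanest route I see is to pass to the complexification and a regular semisimple $x$, and to study the vector-valued function $\lambda\mapsto\nabla f_{i}(x+\lambda a)\in\goth g$, whose Taylor coefficients at $\lambda=0$ are, under the Killing identification, precisely the sought gradients $\nabla f_{i,k}(x)$. Each $\nabla f_{i}(x+\lambda a)$ lies in the centraliser $\ker\ad_{x+\lambda a}$, a Cartan subalgebra for all but finitely many $\lambda$; the content of the statement is that, as $\lambda$ varies, these moving Cartan subalgebras and their $\lambda$-jets together span a subspace of the sharp dimension $\tfrac12(\dim\goth g+\rk\goth g)$, and that this can fail only when $a$ is non-regular. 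Concretely I would aim to reduce the independence to the non-degeneracy of a Vandermonde-type linear system, the distinctness of the relevant eigenvalues being furnished by the regularity of $a$. Equivalently, and more conceptually, one recognises the $f_{i,k}$ as the joint Casimirs of the Poisson pencil $P_{x}+\lambda P_{a}$ (the Lie--Poisson tensor together with the bracket frozen at $a$) and invokes the completeness criterion for such pencils, whose hypotheses --- control of the rank of every bracket in the family, including the limit $\lambda\to\infty$ corresponding to $P_{a}$ --- are guaranteed precisely by regularity of $a$. Either way, once independence is established, the $\tfrac12(\dim\goth g+\rk\goth g)$ functions are independent and in involution, which is exactly Liouville integrability on the generic symplectic leaves of $\goth g$.
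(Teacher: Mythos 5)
The paper itself gives no proof of Theorem~\ref{MFth}: it is quoted from \cite{MF1} as background, so there is no internal argument to compare against; your proposal has to stand on its own. Your treatment of the first two bullets is complete and correct, and follows the standard route. In particular your derivation of $[a,b]=0$ by pairing \eqref{sectional} against an arbitrary $y$, using invariance of the Killing form and symmetry of $R$ to get $R([a,y])=[b,y]$ and then setting $y=a$, is sound (nondegeneracy of the Killing form on a semisimple $\goth g$ justifies the cancellation), and it is the same trick the paper uses in Proposition~\ref{prop1} to show $[A,B]=0$. The involution argument via $\langle x,[p,q]\rangle=-\lambda\langle a,[p,q]\rangle=-\mu\langle a,[p,q]\rangle$, with the diagonal case $\lambda=\mu$ recovered by polynomiality, is exactly the Mischenko--Fomenko shift argument.

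The genuine gap is in the third bullet, and you partly acknowledge it yourself. You set up the count correctly ($\sum_i m_i=\tfrac12(\dim\goth g+\mathrm{rk}\,\goth g)$ via $\sum_i(m_i-1)=$ number of positive roots), but the functional independence of the differentials $df_{i,k}$ at a generic point --- which is the entire analytic content of the integrability claim --- is never established: ``I would aim to reduce the independence to the non-degeneracy of a Vandermonde-type linear system'' is a plan, not a proof. Moreover, the Vandermonde route as you describe it only works when $a$ is regular \emph{semisimple}, since it rests on distinctness of eigenvalues along the line $x+\lambda a$; the theorem as stated assumes only that $a$ is regular in the adjoint sense (cf.\ Remark~\ref{regularcase}), and a regular $a$ may have nontrivial Jordan structure, in which case the eigenvalue-separation argument collapses. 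Your second suggested route --- recognising the $f_{i,k}$ as joint Casimirs of the pencil $P_x+\lambda P_a$ and invoking the completeness criterion for Poisson pencils (this is \cite{Bolsinov91} in the present paper's bibliography) --- does cover arbitrary regular $a$, but invoking that criterion is citing a theorem strictly stronger than the step you need, not proving it. So as written, bullets one and two are proved, while bullet three is an accurate road map with its central lemma missing; to close it you must either carry out the rank computation for the system of gradients $\nabla f_i(x+\lambda a)$ at a regular semisimple $a$ (and say explicitly that you are proving the theorem only in that case, which is in fact the case treated in \cite{MF1}), or verify the hypotheses of the pencil completeness criterion rather than merely naming it.
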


This construction has a very important particular case. If the Lie algebra $\goth g$ admits  a  $\Z_2$-grading,  i.e., a decomposition $\goth g = \goth h  + \goth v$  (direct sum of  subspaces) such that $[\goth h,\goth h]\subset \goth h$, $[\goth h,\goth v]\subset \goth v$, $[\goth v,\goth v]\subset \goth h$,
then  we may consider  $R:\goth h \to \goth h$  satisfying   (\ref{sectional}) with $a,b\in \goth v$,  and
Theorem   \ref{MFth} still holds if we replace $\goth g$ by $\goth h$.

The most important  example for applications (in particular,  in the theory of integrable tops)  is $\goth  g = \mathrm{sl}(n,\R)$,  $\goth h = \so(n,\R)$, with  $a$ and $b$ symmetric matrices. This  is  the  situation  that was studied in the pioneering work by S.~Manakov \cite{Manakov}  leading  to integrability of the Euler equations of $n$-dimensional rigid body dynamics.

From the algebraic point of view,  the above construction still makes sense if we replace $\so(n)$ by $\so(p,q)$ and  assume $a,b$ to be  symmetric operators with respect to the corresponding indefinite form $g$.
Moreover, if  we complexify our considerations we do not even notice any difference.
However, to indicate the presence (but not influence) of the bilinear form $g$, we shall denote the space of  $g$-symmetric operators by $\Sym(g)$, and the Lie algebra of $g$-skew-symmetric operators by $\so(g)$.

\begin{definition}
\label{defsect} 
{\rm  We shall say that $R: \so(g) \to \so(g)$ is a {\it sectional operator} associated with $A,B\in \Sym(g)$,  if  $R$ is symmetric with respect to the Killing form and the following identity holds:
\begin{equation}
[R(X), A] = [X,B],    \quad \mbox{for all } X\in  \so(g). 
\label{sectionall}
\end{equation}
}\end{definition}

We follow the terminology introduced by Fomenko and Trofimov  in \cite{fomtrofbook,troffom} where they studied various generalisations of such operators (see also \cite{MF1}, \cite{BolsKonyaev}). Strictly speaking, the above definition is just a particular case of  a more general construction.  The term ``sectional''  was motivated by the following reason.   The  identities
(\ref{sectional}) and (\ref{sectionall}) suggest  that one may represent $R$ as $\mbox{ad}_A^{-1}\mbox{ad}_B$,  but in general we cannot do so because $\mbox{ad}_A$, as a rule,  is not invertible. That is why  the operator $R$ splits into different parts each of which acts independently on its own subspace (section).  A similar partition of $R$ into ``sections'' will be seen in the proof of Proposition \ref{prop4}  below.          

\begin{remark}\label{twotypes}
In fact, there are two different types of sectional operators defined respectively by  \eqref{sectional} and  \eqref{sectionall}.  In this paper we focus on those defined by \eqref{sectionall} (Definition~\ref{defsect}). The first type, in some sense more natural and fundamental,  was introduced and  studied by Mischenko and Fomenko in  \cite{MF1, MischFom2}.  Traditionally, the operators from this class are denoted by $\varphi_{a,b,D}$,  they possess many interesting properties and applications too,  and we refer to
\cite{ Ratiu,  Bolsinov91, Konyaev, Konyaev2, troffom}   for examples and details.  
\end{remark}

In the next section we discuss basic properties of sectional operators in the sense of Definition \ref{defsect}.

\section{Algebraic properties of sectional operators}\label{sect2}

The first property is well-known. 

\begin{proposition} \label{prop1}
Let $R$ be a sectional operator associated with $A,B\in \Sym(g)$, i.e., satisfy \eqref{sectionall} for all $X\in \so(g)$. Then $A$ and $B$ commute. Moreover, $B$  lies in the center of the centralizer of $A$.  In particular, 
$B$ can be written as $B=p(A)$ for some polynomial $p(\cdot)$.
\end{proposition}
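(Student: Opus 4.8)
The plan is to carry out the whole argument inside the associative algebra $\gl(V)$ of all operators on the underlying space $V$ of the form $g$, using the trace pairing $\tr(PQ)$, whose restriction to $\so(g)$ is proportional to the Killing form (so the hypothesis ``$R$ symmetric'' reads $\tr(R(X)Y)=\tr(XR(Y))$). I record three elementary facts that I will use repeatedly: $\gl(V)=\so(g)\oplus\Sym(g)$ is an orthogonal decomposition; the bracket of two $g$-symmetric operators is skew and the bracket of a symmetric with a skew one is symmetric, so $\ad_A$ interchanges $\so(g)$ and $\Sym(g)$; and the trace pairing is invariant, $\tr([P,Q]W)=\tr(P[Q,W])$, and nondegenerate on $\so(g)$. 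The first real step is to derive the auxiliary identity
\[
R([A,S])=[B,S]\qquad\text{for all }S\in\Sym(g).
\]
To get it, apply $\tr(\,\cdot\,S)$ to the defining relation $[R(X),A]=[X,B]$ and move $A,B$ across the bracket by invariance, obtaining $\tr(R(X)[A,S])=\tr(X[B,S])$. Since $[A,S],[B,S]\in\so(g)$, the symmetry of $R$ converts the left side into $\tr(X\,R([A,S]))$, and nondegeneracy of $\tr$ on $\so(g)$ cancels the arbitrary $X\in\so(g)$.

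Next I would prove $[A,B]=0$. Substituting $X=[A,S]\in\so(g)$ into the defining relation and rewriting $R([A,S])=[B,S]$ by the auxiliary identity gives $[[B,S],A]=[[A,S],B]$. Feeding this into the Jacobi identity $[[A,S],B]+[[S,B],A]+[[B,A],S]=0$ collapses the first two terms and leaves $[[A,B],S]=0$ for every $S\in\Sym(g)$. Thus $[A,B]$, which is skew, commutes with all of $\Sym(g)$; since $g$ is nondegenerate, $\Sym(g)$ generates $\gl(V)$ as an associative algebra, so $[A,B]$ is a scalar operator, and the only $g$-skew scalar is $0$. Hence $A$ and $B$ commute.

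It remains to upgrade $[A,B]=0$, i.e.\ $B\in Z(A):=\{W:[W,A]=0\}$, to the assertion that $B$ lies in the \emph{center} of $Z(A)$; by the double-commutant theorem for a single operator the center of $Z(A)$ equals $\R[A]$, so this is precisely the statement $B=p(A)$. Specializing the auxiliary identity to $S\in\Sym(g)$ with $[A,S]=0$ yields $[B,S]=R(0)=0$ at once, so $B$ commutes with every symmetric element of $Z(A)$. The task is therefore to pass from the symmetric part of $Z(A)$ to all of it. The clean route is to note that $Z(A)$ is stable under the $g$-adjoint (since $A$ is self-adjoint), and that by Jacobi $B$ automatically commutes with any bracket $[S_1,S_2]$ of symmetric elements of $Z(A)$; hence it suffices to know that $Z(A)\cap\so(g)$ is spanned by such brackets, equivalently that $Z(A)\cap\Sym(g)$ generates $Z(A)$ as an associative algebra.

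This last algebraic fact is the main obstacle. When $A$ is semisimple it is immediate: distinct eigenvalues of a $g$-symmetric operator have $g$-orthogonal eigenspaces on which $g$ is nondegenerate, the symmetric operators supported on a single eigenspace already lie in $Z(A)\cap\Sym(g)$, and their centralizer among the operators of that eigenspace is scalar; so $B$ is scalar on each eigenspace and $B=p(A)$ (one complexifies to treat complex eigenvalues, with $p$ real by conjugation symmetry). The genuinely delicate situation is a non-semisimple $A$ with non-regular Jordan structure, where the relevant eigenspaces can be $g$-isotropic; there I would complexify, decompose into generalized eigenspaces, and verify the generation statement block by block from the classification of the induced involution on $Z(A)\cong\prod_i M_{n_i}$. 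I expect essentially all of the remaining work to sit in checking this generation property in the isotropic/Jordan case.
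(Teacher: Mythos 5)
Your first two steps are sound and run on the same mechanism as the paper's proof (trace pairing, invariance, nondegeneracy on $\so(g)$). The auxiliary identity $R([A,S])=[B,S]$ is a correct dualisation of \eqref{sectionall}, and your Jacobi-identity route to $[A,B]=0$ works, though it is longer than necessary: the paper gets $[A,B]=0$ in one line, $\langle [B,A],X\rangle = \langle A,[X,B]\rangle = \langle A,[R(X),A]\rangle = \langle [A,A],R(X)\rangle = 0$, which does not even use the symmetry of $R$, since $[B,A]\in\so(g)$ makes nondegeneracy of the trace form on $\so(g)$ immediately applicable. Likewise, your specialisation $[A,S]=0$ giving $[B,S]=0$ for all $g$-symmetric $S$ in the centraliser is exactly what the paper's substitution of $\xi\in\goth z_A$ for $A$ yields.

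The genuine gap is the step you yourself flag: passing from ``$B$ commutes with $\goth z_A\cap\Sym(g)$'' to ``$B$ lies in the centre of $\goth z_A$''. You reduce this to the claim that $\goth z_A\cap\Sym(g)$ generates $\goth z_A$ as an associative algebra, prove it only for semisimple $A$, and explicitly defer the Jordan/isotropic case --- but non-semisimple $A$ is precisely the case of interest in the pseudo-Riemannian setting, so the proposal is not a complete proof. Moreover, your sketched route misdescribes the object to be analysed: for non-semisimple $A$ the centraliser is not $\prod_i M_{n_i}$; it has a nontrivial radical (it is the endomorphism algebra of a module $\oplus_i \bigl(\R[t]/(t^{k_i})\bigr)^{m_i}$, e.g. $M_m\bigl(\R[t]/(t^k)\bigr)$ when all $\lambda$-blocks have equal size $k$), and the radical is exactly where the isotropic phenomena live; the ``classification of the induced involution'' on a product of matrix algebras only covers the semisimple quotient. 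A smaller slip: your ``equivalently'' between ``$\goth z_A\cap\so(g)$ is spanned by brackets of symmetric elements'' and associative generation is an implication in one direction only, though either condition would suffice, since an operator commuting with a generating set commutes with the generated algebra. In fairness, you have correctly located a point that the paper itself compresses into the word ``obviously'': read literally, the paper's pairing argument gives $[B,\xi]=0$ only for $g$-symmetric $\xi\in\goth z_A$, because for skew $\xi$ the commutator $[B,\xi]$ is $g$-symmetric, hence automatically orthogonal to $\so(g)$, and the pairing against $X\in\so(g)$ detects nothing. So your diagnosis of where the real content of the ``centre of the centraliser'' claim sits is accurate; but having located it, your proposal leaves that content unproven.
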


\proof
  Indeed,
$\langle [B,A], X \rangle = \langle A, [X,B] \rangle =\langle A, [R(X),A] \rangle =
\langle [A,A], R(X) \rangle =0 $ for any $X\in \so(g)$,  so $[A,B]=0$.   Moreover, if instead of $A$ we substitute any element  $\xi$  from its centralizer $\goth z_A=\{ Y~|~ [Y,A]=0\}$,  we obviously get the same conclusion
$[B,\xi]=0$, i.e., $B$  lies in the center of the centralizer of $A$.  Here, by $\langle~,~\rangle$ we understand the usual invariant form $\langle X, Y\rangle = \Tr XY$. 

The representation of $B$ as a polynomial in $A$ is a standard fact from matrix algebra:  the centre of the centraliser of every square matrix $A$ is generated by its powers $A^k$,  
 $k=0,1,2,\dots$.
\endproof

Given $A$ and $B=p(A)$, can $R$ be reconstructed from the relation \eqref{sectionall}? Let $R_1$ and $R_2$ be two operators satisfying  \eqref{sectionall}. Then we have 
$$
[R_1(X) - R_2(X), A]=0,
$$
which means that the image of $R_1-R_2$ belongs to the centraliser of $A$ in $\so(g)$, i.e. the subalgebra
$$
\goth g_A = \{ Y\in \so(g)~|~ [Y,A]=0\}.
$$
In other words, we see that  $R$ can be reconstructed from  \eqref{sectionall} up to an arbitrary operator with the image in $\goth g_A$.

Also we notice that $\goth g_A$ is an invariant subspace for $R$.  Indeed, if $X\in \goth g_A$, then   $X$ commute with $B=p(A)$,  therefore the right hand side of  \eqref{sectionall} vanishes and we have $[R(X), A]=0$, i.e. $R(X)\in \goth g_A$. 

From the algebraic viewpoint, these two properties mean that the induced operator
\begin{equation}
\label{inducedR}
\tilde R : \so(g)/\goth g_A \to \so(g)/\goth g_A 
\end{equation}
is well defined and can be uniquely reconstructed from \eqref{sectionall}.

\begin{remark}
\label{regularcase}
As an important particular case,  assume that $A$ is {\it regular} in the sense of the adjoint representation, i.e., the centraliser $\goth z_A$ of $A$ has minimal possible dimension.  It is well known that in this case the centraliser of $A$ is generated by its powers $A^k$.  Hence  $\goth g_A=\goth z_A\cap \so(g)$ is trivial, as all the elements of $\goth z_A$ are $g$-symmetric, whereas $\so(g)$ consists of $g$-skew-symmetric matrices.  Therefore  the sectional operator $R$ can be uniquely reconstructed from $A$ and $B$. Namely,  $R(X)={\rm ad}_A^{-1}{\rm ad}_B (x)$, a well-known formula in the theory of integrable systems on Lie algebras \cite{MF1}.  
\end{remark}

It is interesting to notice that in the general case (i.e., for arbitrary $A$),  there is another explicit formula for a partial ``solution'' of \eqref{sectionall}.

\begin{proposition} \label{prop2}
\label{aboutR0}
Let $B=p(A)$, then 
\begin{equation}
R_0(X) = \frac{d}{dt}\Big|_{t=0}\,p(A+tX)
\label{rx}
\end{equation}
is a sectional operator associated with $A$ and $B$.  In particular, if $A$ is regular, then  $R_0(X) = {\rm ad}_A^{-1}{\rm ad}_B (X)$ and it is a unique solution of \eqref{sectionall}.
\end{proposition}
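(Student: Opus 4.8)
The plan is to verify directly that $R_0$ meets all three requirements in Definition~\ref{defsect} — that it takes values in $\so(g)$, that it is symmetric with respect to the trace form $\langle X,Y\rangle=\Tr XY$, and that it satisfies \eqref{sectionall} — and then to deduce the regular case from the uniqueness already established above.

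The key identity is obtained by differentiating a tautology. Since every polynomial in a matrix commutes with that matrix, $[\,p(A+tX),\,A+tX\,]=0$ for all $t$. Differentiating at $t=0$, using bilinearity of the bracket and $p(A)=B$, I get
$$
\Big[\tfrac{d}{dt}\big|_{t=0}p(A+tX),\,A\Big]+[B,X]=0,
$$
that is, $[R_0(X),A]=[X,B]$, which is precisely \eqref{sectionall}. To see that $R_0(X)\in\so(g)$, let $Y\mapsto Y^{*}$ denote the $g$-adjoint, an anti-automorphism of the matrix algebra fixing real scalars, with $A^{*}=A$ and $X^{*}=-X$. Then $(A+tX)^{*}=A-tX$, so for the (real) polynomial $p$ we have $p(A+tX)^{*}=p(A-tX)$; differentiating at $t=0$ gives $R_0(X)^{*}=-R_0(X)$, i.e. $R_0(X)$ is $g$-skew-symmetric.

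For the symmetry of $R_0$ with respect to the trace form I would pass to an antiderivative: choose a polynomial $q$ with $q'=p$ and set $\Phi(t,s)=\Tr\,q(A+tX+sY)$. Using the standard identity $\frac{d}{dt}\Tr\,q(M+tX)=\Tr\big(q'(M)\,X\big)=\Tr\big(p(M)X\big)$, one computes $\partial_t\partial_s\Phi|_{t=s=0}=\Tr\big(R_0(X)\,Y\big)$ and $\partial_s\partial_t\Phi|_{t=s=0}=\Tr\big(R_0(Y)\,X\big)$. Since $\Phi$ is a polynomial in $t,s$, its mixed partials agree, and cyclicity of the trace yields $\Tr\big(R_0(X)Y\big)=\Tr\big(X\,R_0(Y)\big)$; thus $R_0$ is symmetric for the trace form and hence for the (proportional) Killing form.

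Finally, for regular $A$ the centraliser $\goth g_A$ is trivial (Remark~\ref{regularcase}), so $\mathrm{ad}_A$ is injective on $\so(g)$; two solutions of \eqref{sectionall} differ by an operator with image in $\goth g_A=0$, so the solution is unique. As $R_0$ is one such solution and $\mathrm{ad}_A\big(R_0(X)\big)=\mathrm{ad}_B(X)$, it coincides with $\mathrm{ad}_A^{-1}\mathrm{ad}_B$, where the inverse is taken on the image of $\mathrm{ad}_B$. I expect the symmetry step to be the only real obstacle: it is the single place where a direct differentiation does not suffice and one must introduce the antiderivative $q$ and invoke equality of mixed partials, whereas both the identity \eqref{sectionall} and the skew-symmetry of $R_0(X)$ emerge immediately from differentiating natural relations.
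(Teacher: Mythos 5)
Your proof is correct, and two of its three steps — differentiating the tautology $[p(A+tX),A+tX]=0$ to get \eqref{sectionall}, and using $(A+tX)^{*}=A-tX$ to get $R_0(X)^{*}=-R_0(X)$ — coincide exactly with the paper's argument. Where you genuinely diverge is the symmetry of $R_0$ with respect to the trace form. The paper proceeds by brute force: by linearity it reduces to $p(t)=t^{k}$, expands $R_0(X)=A^{k-1}X+A^{k-2}XA+\dots+XA^{k-1}$, and shuffles factors under the trace using cyclicity until $\langle R_0(X),Y\rangle=\langle X,R_0(Y)\rangle$ appears. You instead introduce an antiderivative $q$ with $q'=p$ and exhibit $\Tr\bigl(R_0(X)Y\bigr)$ as the mixed second partial $\partial_t\partial_s\big|_{t=s=0}\Tr\,q(A+tX+sY)$, so symmetry becomes equality of mixed partials of a polynomial — in effect you identify $R_0$ as the Hessian of the scalar potential $x\mapsto\Tr\,q(x)$ at $x=A$, which is coordinate-free, avoids the reduction to monomials, and incidentally connects the statement to the shifted invariants $f(x+\lambda a)$ of the argument shift method; the paper's computation is more elementary and makes the explicit form of $R_0$ visible, which it reuses later (e.g.\ in Proposition \ref{prop3}, where the representation $R_0(X)=\sum_k C_k X D_k$ is exactly what is needed). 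You are also somewhat more careful than the paper on the final claim: the paper simply asserts the regular case, leaving uniqueness to Remark \ref{regularcase}, whereas you spell out that two solutions differ by an operator with image in $\goth g_A=\{0\}$ and that $\ad_A$ is injective on $\so(g)$, so $R_0=\ad_A^{-1}\ad_B$. The only caveat, which you in fact flag yourself, is that $p(A+tX)^{*}=p(A-tX)$ needs $p$ to have coefficients fixed by $*$ (real coefficients); the paper makes the same tacit assumption.
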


\proof
Indeed,  $[p(A+tX), A+tX]=0$  and differentiating w.r.t. to $t$ gives
$$
0=\frac{d}{dt}\Big|_{t=0}\,[p(A+tX),A+tX]=\Bigl[\frac{d}{dt}\Big|_{t=0}\,p(A+tX),A\Bigr]+[p(A),X],
$$
i.e., $\bigl[\frac{d}{dt}\big|_{t=0}\,p(A+tX),A\bigr]=[X,B]$, as required. 

However, we also need to check that $R_0(X) \in \so(g)$, i.e.,   $R_0(X)^* = -R_0(X)$, where  $*$ denotes  ``$g$--adjoint'':
$$
g(L^*u,v)=g(u,Lv), \qquad  u,v\in V.
$$

Since  $A^{\ast}=A$, $X^{\ast}=-X$,
$(p(A+tX))^{\ast}=p(A^{\ast}+tX^{\ast})$ and $"\frac {d}{dt}"$ and $"\ast"$ commute, we have
$$
\begin{aligned}
R_0(X)^* & = \frac{d}{dt}\Big|_{t=0}\,p(A+tX)^*  =
\frac{d}{dt}\Big|_{t=0}\,p(A^*+tX^*)= \\
& =\frac{d}{dt}\Big|_{t=0}\,p(A-tX)=
-\frac{d}{dt}\Big|_{t=0}\,p(A+tX)= -R_0(X),
\end{aligned}
$$
as needed.  Thus, $R_0(X) \in \so(g)$.

Finally, we  check that $R_0$ is symmetric with respect to the Killing form.  Since the Killing form on $\so(g)$ is proportional to a simpler invariant form $\langle X,Y\rangle = \Tr XY$, we will use the latter for our verification.  Without loss of generality we may assume that $p(A)= A^k$  (the general case follows by linearity). Then
$$
R_0(X) = \frac{d}{dt}\Big|_{t=0}\,(A+tX)^k = A^{k-1} X + A^{k-2} X A + \dots + AXA^{k-2} + XA^{k-1},
$$
and we have
$$
\begin{aligned}
\langle R_0(X), Y \rangle &=  \Tr ( (A^{k-1} X + A^{k-2} X A + \dots + AXA^{k-2} + XA^{k-1})\cdot Y) =  \\
                          &=  \Tr ( X\cdot (YA^{k-1} + AYA^{k-2} + \dots + A^{k-2}YA + A^{k-1}Y)  ) = \langle X, R_0(Y) \rangle,
\end{aligned}
$$
as required. \endproof

Another interesting property of sectional operators is that they satisfy the Bianchi identity.  To see this,  we use the following natural identification of $\Lambda^2 V$ and $\so(g)$:
\begin{equation}
\label{identify}
 \Lambda^2 V \longleftrightarrow \so(g),      \quad    v\wedge u = v\otimes g(u) - u \otimes g(v).
\end{equation}
Here the bilinear form $g$ is understood as an isomorphism $g:V \to V^*$  between ``vectors'' and ``covectors''.
Taking into account this identification, we have the following

\begin{proposition}\label{prop3}
$R_0$ defined by \eqref{rx} satisfies the Bianchi identity, i.e.
$$
R_0(u\wedge v)w+R_0(v\wedge w)u+R_0(w\wedge u)v=0\quad \mbox{ for all } u,v,w \in V.
$$
\end{proposition}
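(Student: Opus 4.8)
The plan is to use the explicit formula for $R_0$ from the proof of Proposition~\ref{prop2} and reduce the whole identity to a single symmetry cancellation. Since $R_0$ depends linearly on the polynomial $p$, and the Bianchi expression is linear in $R_0$, it suffices to treat the monomial case $p(A)=A^{k}$, for which we already computed
$$
R_0(X)=\sum_{j=0}^{k-1}A^{k-1-j}\,X\,A^{j}.
$$
So I would fix this $R_0$ and aim to compute the cyclic sum directly in terms of the vectors $u,v,w$.

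The next step is to translate the composite action $R_0(u\wedge v)\,w$ into vector language. Under the identification \eqref{identify} the skew operator $u\wedge v$ acts by $(u\wedge v)(z)=u\,g(v,z)-v\,g(u,z)$, so substituting $X=u\wedge v$ into the formula for $R_0$ and letting the result act on $w$ gives
$$
R_0(u\wedge v)\,w=\sum_{j=0}^{k-1}\Bigl[(A^{k-1-j}u)\,g(v,A^{j}w)-(A^{k-1-j}v)\,g(u,A^{j}w)\Bigr],
$$
together with the two analogous expressions obtained by cyclically permuting $u,v,w$. I would then add the three terms and regroup them according to which of the vectors $A^{k-1-j}u$, $A^{k-1-j}v$, $A^{k-1-j}w$ is being scaled. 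Collecting, say, the coefficient of $A^{k-1-j}u$ (which picks up one contribution from the $(u,v,w)$-term and one from the $(w,u,v)$-term) yields the scalar $g(v,A^{j}w)-g(w,A^{j}v)$, and similarly for the other two vectors. The whole Bianchi identity thereby reduces to showing that each such difference vanishes for every $j$.

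The one substantive point — and really the only place where the hypotheses are used — is the identity $g(v,A^{j}w)=g(w,A^{j}v)$, which combines the symmetry of the form $g$ with the $g$-self-adjointness of $A$ (hence of every power $A^{j}$): $g(v,A^{j}w)=g(A^{j}v,w)=g(w,A^{j}v)$. Once this is observed, every regrouped coefficient is zero and the cyclic sum vanishes identically. I do not expect a genuine obstacle here; the only care required is in the bookkeeping of the regrouping, so that the symmetric pairing is exposed in exactly the form $g(v,A^{j}w)-g(w,A^{j}v)$ where the cancellation is transparent.
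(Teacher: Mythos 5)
Your proof is correct and takes essentially the same route as the paper's: both reduce by linearity to terms of the form $X\mapsto CXD$ (in your case $C=A^{k-1-j}$, $D=A^{j}$; the paper phrases it for arbitrary $g$-symmetric $C,D$), substitute $X=u\wedge v$ via the identification \eqref{identify}, take the cyclic sum, and cancel the regrouped coefficients using the $g$-self-adjointness of the powers of $A$ together with the symmetry of $g$. The only difference is cosmetic bookkeeping, not a different idea.
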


\begin{proof}
It is easy to see that our operator  $R_0: \Lambda^2 V\simeq \so(g) \to \so(g)$  can be written as $R_0(X)=\sum_k C_kXD_k$, where $C_k$ and $D_k$ are some $g$-symmetric operators  (in our case these operators are some powers of $A$).  Thus, it is sufficient to check the Bianchi identity for operators of the form $X \mapsto CXD$.  

For $X=u\wedge v$ we have
$$
C(u\wedge v)Dw=  Cu \cdot g(v, Dw) - Cv \cdot g(u, Dw)
$$
Similarly, if we cyclically permute $u,v$ and $w$:
\begin{equation*}
C(v\wedge w)Du=  Cv \cdot g(w, Du) - Cw \cdot g(v, Du)
\end{equation*}
and
\begin{equation*}
C(w\wedge u)Dw=  Cw \cdot g(u, Dv) - Cu \cdot g(w, Dv).
\end{equation*}
Summing  these three expressions and  taking into account that $C$ and $D$ are $g$-symmetric, we obtain zero, as required. \end{proof}

One more useful property is related to the case when  $B=p(A)=0$,  for instance if $p(\cdot)=p_{\mathrm{min}}(\cdot)$ is the minimal polynomial for $A$.  This case seems to be meaningless, but the operator $R_0(X)$ defined by  \eqref{rx} turns out to be non-trivial  (as  the derivative of $p_{\mathrm{min}}(\cdot)$ does not vanish!).

\begin{proposition}
\label{prop4}
Let $p(A)=0$, then  the  image of $R_0$  defined by \eqref{rx} is contained in $\goth g_A$, the centraliser of $A$ in $\so(g)$:
$$
R_0(X)  \in  \goth g_A = \{ Y\in \so(g)~|~ [Y,A]=0\}.
$$
\end{proposition}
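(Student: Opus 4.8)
The plan is to deduce the statement as an immediate corollary of Proposition \ref{prop2}. That proposition asserts that, for an arbitrary polynomial relation $B = p(A)$, the operator $R_0(X) = \frac{d}{dt}\big|_{t=0}\, p(A+tX)$ is a sectional operator associated with $A$ and $B$; in particular it takes values in $\so(g)$ and satisfies the defining identity $[R_0(X), A] = [X, B]$ for every $X \in \so(g)$. Nothing in the derivation of that identity (differentiating $[p(A+tX), A+tX] = 0$ in $t$) requires $B$ to be nonzero, so it remains valid verbatim in the degenerate situation $B = p(A) = 0$.

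First I would substitute $B = 0$ into this identity. Then for every $X \in \so(g)$ we obtain $[R_0(X), A] = [X, 0] = 0$, i.e.\ $R_0(X)$ commutes with $A$. Since Proposition \ref{prop2} also guarantees $R_0(X) \in \so(g)$, the element $R_0(X)$ is simultaneously $g$-skew-symmetric and commutes with $A$, which is exactly the defining condition for membership in $\goth g_A = \{ Y\in \so(g)~|~ [Y,A]=0\}$. Hence $R_0(X) \in \goth g_A$ for all $X$, as claimed.

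If one prefers a self-contained argument that does not cite Proposition \ref{prop2}, I would expand $R_0$ directly. Writing $p(t) = \sum_k c_k t^k$ and using $R_0(X) = \sum_k c_k \sum_{i+j=k-1} A^i X A^j$, the commutator $[R_0(X), A]$ telescopes: within each fixed degree $k$ the interior terms cancel in pairs and only the two extreme terms survive, leaving $\sum_k c_k (X A^k - A^k X) = [X, p(A)]$. Setting $p(A) = 0$ again yields $[R_0(X), A] = 0$, and the $g$-skew-symmetry of $R_0(X)$ is checked exactly as in the proof of Proposition \ref{prop2}.

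In either route there is essentially no obstacle: the whole content is carried by the vanishing of the right-hand side $[X, B]$ once $B = 0$. The only mildly fiddly point is the index bookkeeping in the telescoping computation; and if one wishes to recover the sharper ``section by section'' picture alluded to in the text, one would in addition decompose $V$ into the generalised eigenspaces of $A$ and observe that $R_0$ preserves this block structure, but none of this is needed for the inclusion being proved.
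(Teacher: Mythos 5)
Your first route is exactly the paper's own proof: it simply invokes Proposition~\ref{prop2} to get $[R_0(X),A]=[X,p(A)]=0$ and concludes $R_0(X)\in\goth g_A$, so your argument is correct and matches the paper essentially verbatim. Your alternative telescoping computation is also sound but is just an unfolding of the same identity, not a genuinely different approach.
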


\proof
We know from Proposition \ref{aboutR0}, that  $R_0$ satisfies the relation $[R_0(X), A]=[X, p(A)]$.  Since $p(A)=0$, we get $R_0(X)\in\goth g_A$.
\endproof

\begin{remark}  Does the image of $R_0(X)=\frac{d}{dt}p_{\mathrm{min}}(A+tX)|_{t=0}$ coincides with $\goth g_A$?   The  answer depends on the structure of Jordan blocks related to each eigenvalue of $A$.   Recall first of all that for a regular matrix $A$,  the subalgebra $\goth g_A$ is trivial, so the question becomes interesting only for singular $A$'s.  A straightforward computation shows that for semisimple $A$ we have $\mathrm{Im}  R_0 = \goth g_A$.  This property still holds in a more general situation if in addition we assume that  each eigenvalue $\lambda$ of $A$ admits at most two Jordan blocks.  More precisely,  the ``bad'' situation is when $\lambda$ admits more than 2 blocks of a non-maximal size. For example, if $A$ has several $\lambda$-blocks of size $k$  and one $m<k$,  then we still have $\mathrm{Im}  R_0 = \goth g_A$.
\end{remark}

As shown above,  $R$ can be reconstructed from $A$ and $B$  modulo operators with images in $\goth g_A$. It is natural to ask a converse question.  Given a sectional operator $R: \so(g) \to \so(g)$,  can we reconstruct $A$ and $B$?

\begin{proposition}\label{prop5}
Assume that  $R: \so(g) \to \so(g)$ is symmetric and satisfies at the same time two identities:
\begin{equation}
[ R(X), A]=[X, B]\qquad \mbox{and}  \qquad [ R(X), A']=[X, B'],
\label{2sectiona}
\end{equation}
with  $A,B,A',B'\in \Sym(g)$.
If $A$ and $A'$  are not proportional (modulo the identity matrix),  then
$B$ is proportional to $A$  and, therefore,  $[R(X) - k\cdot X, A]=0$ for some $k\in \R$. Moreover, if $A$ is regular, then   $R  = k \cdot {\mathrm{id}}$.
\end{proposition}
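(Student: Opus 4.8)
The plan is to use the symmetry of $R$ to put both identities of \eqref{2sectiona} into a dual form, distil from the pair a single compatibility relation between $A,A',B,B'$, and then run an eigenvalue computation that forces $B$ to be an affine function of $A$.

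First I would collect what comes for free. By Proposition~\ref{prop1} we already have $[A,B]=[A',B']=0$ and $B=p(A)$, $B'=q(A')$ for some polynomials. Pairing each identity in \eqref{2sectiona} with a \emph{symmetric} $S$ and using that $R$ is symmetric with respect to the trace form converts them into the dual identities $R([A,S])=[B,S]$ and $R([A',S])=[B',S]$, valid for all $S\in\Sym(g)$ (note that both sides now lie in $\so(g)$). Putting $S=A'$ in the first and $S=A$ in the second and comparing gives the compatibility relation $[A,B']=[A',B]$; the same relation drops out of the observation that $A\mapsto B$ is linear on the space of admissible $A$'s, so that $B+B'$ must commute with $A+A'$ by Proposition~\ref{prop1}. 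A further Jacobi manipulation — apply $\ad_{A'}$ to the first identity of \eqref{2sectiona}, apply $\ad_A$ to the second, subtract, and feed in $[A,B']=[A',B]$ — then yields $[[A,A'],R(X)]=0$ for every $X$, i.e.\ $C:=[A,A']$ commutes with $\Imm R$. This last fact conveniently separates the two regimes below, $A,A'$ commuting versus not.

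The core of the argument is the regular case, which I would carry out in a $g$-orthogonal eigenbasis of $A$ with distinct eigenvalues $\lambda_1,\dots,\lambda_n$ and root vectors $X_{ij}$ spanning $\so(g)$. By Remark~\ref{regularcase} the first identity determines $R$ uniquely as $\ad_A^{-1}\ad_B$, which on this basis is diagonal, $R(X_{ij})=\mu_{ij}X_{ij}$ with the divided differences $\mu_{ij}=\dfrac{p(\lambda_i)-p(\lambda_j)}{\lambda_i-\lambda_j}$. Substituting into the second identity gives $[\,B'-\mu_{ij}A',\,X_{ij}\,]=0$, i.e.\ $B'-\mu_{ij}A'$ commutes with the rotation generator $X_{ij}$, for every pair. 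If $A'$ does not commute with $A$, it has a nonzero off-diagonal entry $(A')_{ik}$ in this basis, and the commutation relation forces $\mu_{ij}=(B')_{ik}/(A')_{ik}$ to be independent of $j$; a short propagation then makes all $\mu_{ij}$ equal, so $p$ is affine on the spectrum of $A$ and $B=cA+dI$. If $A'$ commutes with $A$ (so $A'$ and $B'$ are simultaneously diagonal, with eigenvalues $\rho_m$ and $q(\rho_m)$), the second identity reduces to $\mu_{ij}=\dfrac{q(\rho_i)-q(\rho_j)}{\rho_i-\rho_j}$; since $A,A'$ are not proportional modulo $I$, the points $(\lambda_m,\rho_m)$ are not collinear, and a short computation over triples again forces the $\mu_{ij}$ to be constant, hence $B=cA+dI$. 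In both cases $B$ is proportional to $A$ modulo the identity, which is the first assertion, and then for regular $A$ one gets $R=\ad_A^{-1}\ad_B=\ad_A^{-1}(c\,\ad_A)=c\cdot\mathrm{id}$, the ``moreover''.

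The step I expect to be the main obstacle is the first assertion for a general, possibly non-regular and non-semisimple $A$, where the clean eigenbasis is no longer available. Here I would replace the root vectors by the generalized eigenspace decomposition of $\ad_A$ and the corresponding ``sections'' of $R$ (in the spirit of Proposition~\ref{prop4}), and show that the second identity is incompatible with any nonscalar Jordan structure of $R$ unless $A'$ is an affine function of $A$. Carrying out this Jordan-block bookkeeping, together with the combinatorial propagation ensuring that all the divided differences coincide, is where the genuine work lies.
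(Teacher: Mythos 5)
Your Jacobi step is wrong, and the sign error earlier in the same paragraph is likely the culprit. The correct compatibility relation (which the paper derives, and which also follows from your ``add the two identities and apply Proposition~\ref{prop1}'' trick) is $[B,A']=[A,B']$, not $[A,B']=[A',B]$ as you wrote — these differ by a sign. With the correct sign, applying $\ad_{A'}$ to the first identity, $\ad_A$ to the second and subtracting gives
$[[A,A'],R(X)]=[[A',X],B]-[[A,X],B']-2[X,[A,B']]$,
whose right-hand side does not vanish. Indeed the claim $[[A,A'],R(X)]=0$ for all $X$ is simply false: take $B=kA$, $B'=kA'$ with $k\ne 0$ and $[A,A']\ne 0$; then $R=k\cdot\mathrm{id}$ satisfies both identities of \eqref{2sectiona}, yet $[[A,A'],R(X)]=k[[A,A'],X]\ne 0$ for suitable $X\in\so(g)$. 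Luckily you never really use this claim (your case split is just on whether $[A,A']=0$), but it should be deleted.

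The genuine gap is the one you flag yourself at the end: your core computation proves the first — and main — assertion only when $A$ is regular \emph{and semisimple with real spectrum}, so that a $g$-orthogonal eigenbasis and the root vectors $X_{ij}$ exist. But the proposition asserts $B\propto A$ for an \emph{arbitrary} $g$-symmetric $A$, and in the pseudo-Riemannian setting that motivates it (e.g.\ Theorem~\ref{thm1}, where $g$ is indefinite) $A$ may have complex eigenvalues and nontrivial Jordan blocks even when regular; regularity does not give diagonalizability here. Deferring that case as ``Jordan-block bookkeeping where the genuine work lies'' means the proposal does not prove the statement — and the bookkeeping you sketch would be genuinely painful, because $R$ is not determined by \eqref{2sectiona} on $\goth g_A$ when $A$ is singular. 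The paper avoids spectral analysis altogether: after normalizing $A,A',B,B'$ to be trace-free and complexifying, it pairs the two identities through the symmetric $R$ (the same first move as your dual identities $R([A,S])=[B,S]$) to get $[[A',y],B]=[[B',y],A]$ for all symmetric $y$; then it contracts this with $y=e_iv^\top+ve_i^\top$, summing over $i$, to force $T=AB'-BA'=0$, leaving the purely quadratic matrix identity $B'yA+AyB'=ByA'+A'yB$, from which an elementary linear-algebra dichotomy ($B=kA$ or $A'=kA$) concludes — uniformly in $A$, with regularity invoked only at the very end to upgrade $[R(X)-k X,A]=0$ to $R=k\cdot\mathrm{id}$ via $\goth g_A=\{0\}$. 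If you want to salvage your route, you would still need an argument of this contraction type (or a full Jordan analysis as in the proof of Proposition~\ref{prop6}) to cover non-semisimple $A$; as written, the eigenbasis computation, while essentially correct where it applies, covers only a special subcase.
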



\proof 
Notice that adding scalar  matrices to $A$ or $B$ does not change the equation, so we consider  $A\mapsto A+ c\cdot \mathrm{Id}$ and $B\mapsto B+ c\cdot \mathrm{Id}$ as trivial  transformations.
Without loss of generality we may then assume all these operators $A,A',B, B'$ to be trace free. Moreover,  we are allowed to complexify all the objects  so that instead of  $\so(g)$ and $\Sym(g)$  we may simply consider the spaces of symmetric and skew-symmetric complex matrices.

Let $y$ and $z$ be arbitrary symmetric matrices, then $[A', y], [A, z] \in \so(g)$
and we have:
$$
[ R([A', y]), A]=[[A',y], B], \quad  [ R([ A , z]), A']=[[A, z], B'].
$$
Since $R$ is symmetric with respect to the Killing form $\langle \ , \ \rangle$  we
have
$$
\langle [[A', y], B] , z\rangle= \langle [ R([A', y]), A], z\rangle =
\langle  R([A', y]), [ A , z] \rangle = \langle  [A', y], R([ A , z]) \rangle =
$$
$$
 \langle  y, [ R([ A , z]), A'] \rangle =
\langle  y , [[A, z], B'] \rangle = \langle [[B',y], A], z \rangle
$$
Since $z$ is an arbitrary symmetric matrix, we conclude that
\begin{equation}
[[A', y], B] =[[B',y], A].
\label{mainident}
\end{equation}

Similarly,  $ [[A, y], B'] =[[B,y], A']$.
Using the Jacobi identity, it is not hard to see that
$[B, A']=[A, B']$.

Rewriting (\ref{mainident}) as
$$
y (B'A - A' B) + (AB' - B A') y = B'yA+ AyB' - ByA' - A'yB
$$
and noticing that $[B, A']=[A, B'] $ implies $B'A - A'B
=AB' - B A'$,  we get
$$
y  T + T  y = B'yA+ AyB' - ByA' - A'yB
$$
where $T$ denotes $AB' - B A'$.

This formula can be understood as a relation between two  linear operators acting on the space of symmetric matrices   $y\in \Sym(g)$).
To get some consequences from this identity,  we take a ``kind of trace''. Recall that we consider $A', A, B', B, y, T$ as usual  symmetric (complex) matrices.

Instead of $y$ we substitute the symmetric matrix of the form  $e_i  v^\top + v e_i^\top$,  where $e_i$ and $v$ are vector-columns ($e_1,\dots, e_n$  is the standard (orthonormal) basis), then apply the result to  $e_i$ and take the sum over $i$. Here is the result:
$$
(e_i  v^\top + v e_i^\top) T e_i + T ( e_i  v^\top + v e_i^\top) e_i=
B' ( e_i  v^\top + v e_i^\top)A e_i + ...
$$
$$
e_i  (Tv, e_i) +  v (Te_i, e_i)+ Te_i (v,e_i) + Tv (e_i, e_i) =
B' e_i (A v, e_i) + B' v (A e_i, e_i) + ...
$$

Using obvious facts from Linear Algebra such as
$$
\sum_i(Te_i, e_i) = \tr \,T,  \quad  \sum_i(e_i,e_i)=n, \quad
\sum_i e_i (v, e_i)=v,
$$
we get
$$
Tv +  \tr \, T\cdot v  + Tv + n \cdot Tv  =
B' A v  +  \tr \, A\cdot B' v  + ...
$$
Taking into account that $A, A', B, B'$ are all trace free we have
$$
((n+2) T +   \tr\, T \cdot {\rm Id} ) v  =
(B' A   + AB' -   A' B - BA') v.
$$
Since $v$ is arbitrary and $T=B' A -   A' B = AB'  - BA'$, we finally get
$$
nT + \tr \, T \cdot {\rm Id }= 0,
$$
but this simply means that $T=0$. Hence we come to the identity of the form
\begin{equation}
B'yA+ AyB' = ByA' + A'yB.
\label{mainident2}
\end{equation}

It remains to use the following simple statement:
if  $A,  B,  A', B'$  are symmetric, $A\ne 0$ and (\ref{mainident2}) holds for any symmetric $y$,
then either    $B = k\cdot  A$, or  $A'= k\cdot A$ for some constant $k\in \R$. 

By our assumption, $A$ and $A'$ are not proportional, so we conclude that
$B= k\cdot A$ and therefore the identity $[ R(X), A]=[X, B]$ becomes $[ R(X)- k\cdot X, A]=0$, as needed.

Now assume that $A$ is regular.  Then  $\goth g_A=\{0\}$ (Remark \ref{regularcase}) and $[ R(X)- k\cdot X, A]=0$ implies 
$R(X)= k\cdot X$ for all $X\in\so(g)$, i.e.,
$R = k\cdot {\rm{id}}$, as was to be proved. \qed


\begin{remark}
A similar result  for sectional operators of the first type (see Remark~\ref{twotypes}) was proved by A.~Konyaev \cite{Konyaev}.
\end{remark}

The next statement describes the eigenvalues of sectional operators.  For regular and semisimple $A$, this fact is well known, see \cite{Manakov, MF1},  and our observation is  a natural generalisation of it.

\begin{proposition} 
\label{prop6}
Let $R: \so(g) \to \so(g)$ be a sectional operator associated with $A$ and $B=p(A)$,
where $p(\cdot)$ is a polynomial.  Let $\lambda_1, \dots, \lambda_s$ be the distinct eigenvalues of $A$. Then the numbers
$$
\frac{p(\lambda_i) - p(\lambda_j)}{\lambda_i - \lambda_j}, \qquad i\ne j,
$$
are eigenvalues of $R$.  Moreover, if $A$ possesses a non-trivial Jordan $\lambda_i$-block, then the number
$$
p'(\lambda_i)
$$
is an eigenvalue of $R$ too  (here $p'$  denotes the derivative of $p$).
\end{proposition}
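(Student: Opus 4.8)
The plan is to reduce everything to the operator induced on the quotient and then to exhibit eigenvectors there. Three facts from Section~\ref{sect2} do most of the work: $\goth g_A$ is $R$-invariant, so $R$ descends to a well-defined operator $\tilde R$ on $\so(g)/\goth g_A$; this $\tilde R$ does not depend on the particular sectional operator chosen for the pair $A,\,B=p(A)$, so all computations may be carried out with the explicit representative $R_0(X)=\frac{d}{dt}\big|_{t=0}p(A+tX)$ of Proposition~\ref{aboutR0}; and, since $\goth g_A$ is invariant, writing $R$ in a basis adapted to $\goth g_A$ makes it block upper-triangular, whence $\mathrm{spec}(R)=\mathrm{spec}(R|_{\goth g_A})\cup\mathrm{spec}(\tilde R)$. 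In particular every eigenvalue of $\tilde R$ is an eigenvalue of $R$, so it suffices to locate the asserted numbers in $\mathrm{spec}(\tilde R)$. I would complexify first, as permitted in the discussion preceding Definition~\ref{defsect}, and decompose $V=\bigoplus_i V_i$ into the generalized eigenspaces of $A$; for distinct eigenvalues these are $g$-orthogonal, so every element of $\goth g_A$ is block-diagonal with respect to this splitting.

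For the divided differences I would pick genuine eigenvectors $u,w$ of $A$, with $Au=\lambda_i u$ and $Aw=\lambda_j w$, $i\ne j$, and set $X=u\wedge w\in\so(g)$. A one-line computation shows that $[u\wedge w,\,C]=(c_j-c_i)\,(u\odot w)$ for every $g$-symmetric $C$ with $Cu=c_i u$, $Cw=c_j w$, where $u\odot w$ denotes the $g$-symmetric operator $z\mapsto u\,g(w,z)+w\,g(u,z)$. Applying this to $C=A$ and to $C=B=p(A)$ gives $[X,B]=\mu_{ij}[X,A]$ with $\mu_{ij}=\frac{p(\lambda_i)-p(\lambda_j)}{\lambda_i-\lambda_j}$, so the defining identity $[R_0(X),A]=[X,B]$ becomes $[R_0(X)-\mu_{ij}X,\,A]=0$, i.e. $R_0(X)\equiv\mu_{ij}X\pmod{\goth g_A}$. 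Since $X$ maps $V_i$ into $V_j$ and $V_j$ into $V_i$, it is not block-diagonal and hence not in $\goth g_A$, so its class is a nonzero eigenvector of $\tilde R$ with eigenvalue $\mu_{ij}$; therefore $\mu_{ij}\in\mathrm{spec}(R)$.

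For the confluent eigenvalue I would restrict to one generalized eigenspace $V_i$ on which $A=\lambda_i\,\Id+N$ with $N\ne0$, and expand $p(A)=p(\lambda_i)\Id+p'(\lambda_i)N+\tfrac12 p''(\lambda_i)N^2+\dots$ there. The key observation is that it suffices to produce a $g$-skew-symmetric $X$, supported on a single non-trivial Jordan block of $N$ of size $m$, with $[X,N]=c\,N^{m-1}$ for some $c\ne0$: then $[X,N^k]=[X,N]N^{k-1}+N[X,N^{k-1}]$ together with $N^{m-1}N=N^m=0$ forces $[X,N^k]=0$ for all $k\ge2$ by induction, while $[X,N]\ne0$. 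Consequently $[X,p(A)]=p'(\lambda_i)[X,N]=p'(\lambda_i)[X,A]$, the identity gives $R_0(X)\equiv p'(\lambda_i)X\pmod{\goth g_A}$, and since $[X,A]=[X,N]\ne0$ the element $X$ is not in $\goth g_A$; hence $p'(\lambda_i)\in\mathrm{spec}(\tilde R)\subseteq\mathrm{spec}(R)$.

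I expect the genuine work to sit in one place: the existence of the skew element $X$ with $[X,N]=c\,N^{m-1}$, equivalently the assertion that $N^{m-1}$ lies in the image of $\ad_N$ on $\so(g|_{V_i})$. This is exactly where the form $g$ enters (a single Jordan block of a $g$-symmetric operator carries the standard anti-diagonal form), and it is most quickly settled by a direct check in Jordan normal form; for instance $X=E_{11}-E_{22}$ works for $m=2$ and $X=E_{12}-E_{23}$ for $m=3$. The only other point demanding attention is organizational, namely that the reductions ``$\pmod{\goth g_A}$'' are legitimate for arbitrary, possibly non-semisimple, $A$; this needs nothing beyond the definition, since any $Y\in\so(g)$ with $[Y,A]=0$ is by definition an element of $\goth g_A$, so no non-degeneracy of the Killing form on $\goth g_A$ is required. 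Everything else is routine linear algebra.
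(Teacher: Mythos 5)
Your argument is correct, but it takes a genuinely different route from the paper's. The paper computes the \emph{entire} spectrum of $R_0$: it splits $\so(g)=\oplus_{i\le j}\goth m_{ij}$ according to the generalised eigenspaces of $A$, shows each $\goth m_{ij}$ is $R_0$-invariant and carries the single eigenvalue $\frac{p(\lambda_i)-p(\lambda_j)}{\lambda_i-\lambda_j}$ (resp.\ $p'(\lambda_i)$ on $\goth m_{ii}$), and then invokes a survival-under-quotient lemma --- an eigenvalue of $\phi$ survives in $V/U$ iff its generalised eigenspace is not contained in $U$ --- checking that $\goth m_{ij}\cap\goth g_A=0$ and that $\goth m_{ii}\not\subset\goth g_A$ unless the block $A_i$ is scalar. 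You instead manufacture, for each asserted number $\mu$, an explicit $X\in\so(g)\setminus\goth g_A$ with $[X,B]=\mu\,[X,A]$, so that the class of $X$ is an honest eigenvector of $\tilde R$. This buys two simplifications: you never need the survival lemma or the spectral computation on the $\goth m_{ij}$ (which in the paper rests on the fact about $Y\mapsto CY-YB$ and an unspecified ``easy computation'' for the diagonal blocks), and --- as your closing remark almost notices --- you do not actually need $R_0$ or the uniqueness of $\tilde R$ either: $[X,B]=\mu\,[X,A]$ combined with the defining identity \eqref{sectionall} gives $[R(X)-\mu X,\,A]=0$, i.e.\ $R(X)\equiv\mu X\pmod{\goth g_A}$, directly for the given $R$, so your first paragraph is harmless but redundant. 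What the paper's route buys in exchange is more information: the full list of eigenvalues of $R_0$, not only the quoted ones.

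Two points should be tightened in your Jordan-block step. First, the equation $[X,N]=c\,N^{m-1}$ must be read blockwise: if $V_i$ contains other Jordan blocks of size $\ge m$, the global power $N^{m-1}$ does not vanish on them, whereas $[X,N]$ is supported on the chosen block $W$; what you need (and what your induction actually uses) is $[X,N]=c\,(N|_W)^{m-1}$ extended by zero. For the extension by zero to stay in $\so(g)$ you must also take the Jordan decomposition $g$-orthogonal, which is exactly the canonical form of \cite{lancaster,leep} that the paper invokes in Section~\ref{sect5}. Second, your existence claim, verified only for $m=2,3$, does hold for all $m$, and your two examples continue to the one-line pattern
$$
X=E_{1,m-1}-E_{2,m}.
$$
Indeed, with $g$ anti-diagonal on the block, $g$-skewness is the flip condition $X_{ij}=-X_{m+1-j,\,m+1-i}$, which these two entries satisfy; and since $([X,N])_{ij}=X_{i,j-1}-X_{i+1,j}$ for $N=\sum_k E_{k,k+1}$, the only nonzero entry of $[X,N]$ is at $(1,m)$ with value $2$, i.e.\ $[X,N]=2\,(N|_W)^{m-1}\ne 0$. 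With these repairs your proof is complete.
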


\proof 
Since $R$ is not always uniquely defined,  we are not able to find all the eigenvalues of $R$ from $A$ and $B$.  However, we can find some of them,  namely those of the induced operator $\tilde R$, see \eqref{inducedR}.   Clearly,  the eigenvalues of $\tilde R$  form a part of the spectrum of $R$ and for our computations we may set $\tilde R = \tilde R_0$, where $R_0$ is explicitly defined by \eqref{rx}.

 Using \eqref{rx} we can easily describe a natural partition of $\so(g)$ into invariant subspaces of $R_0$ each of which, as we shall see later,  ``carries'' one single eigenvalue of $R_0$ only  (some of them may accidentally coincide, but generically our invariant subspaces are exactly generalised eigenspaces of $R_0$).

For simplicity we shall assume that all the eigenvalues of $A:V \to V$ are real.  The decomposition $V=\oplus_i V_{\lambda_i}$ into generalised eigenspaces of $A$ naturally induces the following decomposition of $\so(g)$ 
$$
\so(g) = \oplus_{i\le j} \goth m_{ij}
$$
where  $\goth m_{ij}$ (that can be understood as $V_{\lambda_i}\wedge V_{\lambda_j}$) is spanned by the operators of the form
$$
v\wedge u = v \otimes g(u) - u \otimes g(v) \in \so(g), \quad \mbox{with }  v\in V_{\lambda_i}, \ u\in V_{\lambda_j},
$$
where $g(u)\in V^*$ is the covector corresponding to $u\in V$ under the natural identification of $V$ and $V^*$ by means of $g$   (so that  $g(v,u)=\langle v, g(u)\rangle)$.

This decomposition becomes transparent in the matrix form if we use a basis adapted to the decomposition $V=\oplus_i V_{\lambda_i}$.  Then
$$
A =\begin{pmatrix}
A_1 & & & \\
& A_2 & & \\
& & \ddots & \\
& & & A_s
\end{pmatrix}, \quad 
g = \begin{pmatrix}
g_1 & & & \\
& g_2 & & \\
& & \ddots & \\
& & & g_s
\end{pmatrix}
$$
and $\so(g)$ can be written in a block form
$$
\so(g)=  \left\{ \begin{pmatrix}
M_{11} & M_{12} & \dots & M_{1s}\\
M'_{12} & M_{22} & \dots & M_{2s}\\
\vdots & \vdots & \ddots & \vdots \\
M'_{1s} & M'_{2s} & \dots & M_{ss}
\end{pmatrix} \right\},
$$
where $M_{ii} \in \so(g_i)$ (diagonal blocks),  the blocks $M_{ij}$, $i<j$ (above the diagonal)  are arbitrary and related to $M'_{ij}$ (below the diagonal)  as
$g_j M'_{ij} = -  M_{ij}^\top g_i$.  Then  $\goth m_{ii} = \so(g_i)\subset \so(g)$, i.e. consists of the diagonal block $M_{ii}$  while all the other blocks vanish and
$\goth m_{ij}$ consists of the pair of blocks $M_{ij}$ and $M'_{ij}$  ($i<j$), the others vanish.

The following facts can be easily verified and we omit details.

\begin{enumerate}
\item Each subspace $\goth m_{ij}$ is $R_0$-invariant.
\item The restriction of $R_0$ onto $\goth m_{ij}$ possesses a single eigenvalue, namely $\frac{p (\lambda_i) - p(\lambda_j)}{\lambda_i - \lambda_j}$.
\item The restriction of $R_0$ onto $\goth m_{ii}$ possesses a single eigenvalue, namely $p'(\lambda_i)$.
\end{enumerate}

The first  is straightforward. The next is based on the following simple fact from matrix algebra.  Let $B$ and $C$ be square matrices of sizes $k\times k$ and $m\times m$ respectively. Suppose that $\lambda$ and $\mu$ are eigenvalues of $B$ and $C$ respectively and $B$ and $C$ have no other eigenvalues.  Then the eigenvalue of the operator  $Y  \mapsto  C Y - Y B$ acting on $k\times m$-matrices $Y$, is unique and equal to $\lambda - \mu$.  The third statement require some easy computation.

Thus, we know all the eigenvalues of the operator $R_0:  \so(g) \to \so(g)$.
Recall that we are interested in the eigenvalues of the induced operator $\tilde R_0: \so(g)/\goth g_A \to \so(g)/\goth g_A$.  Under such a reduction, some of eigenvalues may disappear.   As the last step of the proof, we are going to explain that all of them survive.

To that end, we use another fact from linear algebra  (which explains which eigenvalues survive under reduction):

\medskip
 {\it Let  $\phi : V \to V$  be a linear operator with an invariant subspace $U\subset V$.   Let $\lambda$ be an eigenvalue of $\phi$ and $V_\lambda\subset V$ be the generalised $\lambda$-eigenspace of $\phi$.  Then $\lambda$ is an eigenvalue of the induced operator $\tilde\phi: V/U \to V/U$ if and only if $V_\lambda \not\subset U$.}
 \medskip

Thus, in order to show that the above eigenvalues of $R_0$ survive under reduction,  it is sufficient to check that $\goth m_{ij}$ and $\goth m_{ii}$ are not contained in $\goth g_A$.  To see this we need just to have a look at the structure of $\goth g_A$. It can be easily checked that $\goth g_A$  has the following  block-diagonal matrix form  (we use the same adapted basis as before):
\begin{equation}
\label{gA}
\goth g_A =\left\{     
X = \begin{pmatrix}  X_1 & & & \\ & X_2 & & \\ & & \ddots & \\ & & & X_s \end{pmatrix},    \quad  X_i \in \goth g_{A_i}   
\right\},
\end{equation}
where $\goth g_{A_i}$ is the centraliser of $A_i$ in $\so(g_i)$, i.e. $\goth g_{A_i} = \{ Y \in \so(g_i)~|~ YA_i = A_iY\}$.
More detailed description of $\goth g_A$ can be found  in \cite{bolstsonev}.

It is seen from this description that the subspace $\goth m_{ij}$ lies ``outside''  $\goth g_A$   and the intersection $\goth m_{ij} \cap \goth g_A$ is trivial so that  $\goth m_{ij}\not\subset \goth g_A$.

For $\goth m_{ii}$, the situation is different.  According to its definition, $\goth m_{ii}$  coincides with $\so(g_i)$ and therefore $\goth m_{ii}$ is contained in $\goth g_A$ (see \eqref{gA})  if and only if $\goth m_{ii} = \so(g_i) = \goth g_{A_i}$, i.e. the matrix $A_i$ commutes with all the $g_i$-skew symmetric matrices $Y\in \so(g_i)$.   This happens, however, if and only if $A_i$ is a scalar matrix, i.e. $A_i = \lambda_i \cdot \mathrm{Id}$.  Otherwise, $\goth g_{A_i}$ is strictly smaller than $\so(g_i)$.  
According to our assumptions  (see Proposition \ref{prop6}),  $A$ possesses a non-trivial Jordan $\lambda_i$-block, i.e. $A_i$ is not scalar.  Hence $\goth m_{ii}$ is not contained in $\goth g_A$ and therefore $\mu_i = p'(\lambda)$ is an eigenvalue of $\tilde R_0$ as needed. This completes the proof of Proposition \ref{prop6}.  \endproof

\begin{remark}
The above results can, more or less automatically, be  transferred to the case of operators $R: \uu(g,J) \to \uu(g,J)$ on the unitary Lie algebra  (in formula \eqref{sectionall} we take $X$ to be skew-hermitean and $A$ and $B$ hermitean).  This case corresponds to the natural $\Z_2$-grading  $\gl(n,\mathbb C) = \uu(g,J) \oplus  \mathrm{Herm}(g,J)$.
\end{remark}

Here is the summary of the above properties.   Let $R: \so(g) \to \so(g)$ be a sectional operator associated with $A,B\in \Sym(g)$. Then
\begin{itemize}
\item  $A$ and $B$ commute, moreover $B=p(A)$ for some polynomial $p(\cdot)$.
\item  $R_0(X)= \frac{d}{dt} |_{t=0}p(A + tX)$ is a sectional operator associated with $A$ and $B=p(A)$.  If $A$ is regular, then a sectional operator associated with $A$ and $B$ is unique and therefore $R=R_0$.
\item  $R_0$ satisfies the Bianchi identity.
\item  If $B=p(A)=0$,  e.g. if $p=p_{\mathrm{min}}$ is the minimal polynomial of $A$, then the  image of $R_0$  is contained in $\goth g_A = \{ Y\in \so(g)~|~ [Y,A]=0\}$, the centraliser of $A$ in $\so(g)$.  Moreover, if each eigenvalue of  $A$  possesses at most two Jordan blocks, then the image of $R_0$ coincides with $\goth g_A$.

\item 
Suppose that  $R$ is, at the same time, a sectional operator for another pair $A',B'\in\Sym(g)$.
If $A\ne \lambda A'+\mu \mathrm{Id}$,  then
$B$ is proportional to $A$  and, therefore,  $[R(X) - k\cdot X, A]=0$ for some $k\in \R$. Moreover, if $A$ is regular, then $R  = k \cdot {\mathrm{id}}$.

\item Let $\lambda_1, \dots, \lambda_s$ be  distinct eigenvalues of $A$ and $B=p(A)$. Then the numbers
$\dfrac{p(\lambda_i) - p(\lambda_j)}{\lambda_i - \lambda_j}$, $i\ne j$,
are eigenvalues of $R$.  Moreover, if $A$ possesses a non-trivial Jordan $\lambda_i$-block, then the number
$p'(\lambda_i)$ is an eigenvalue of $R$ too  (here $p'$  denotes the derivative of $p$).

\end{itemize}

\section{Projectively equivalent metrics:  curvature tensor as a sectional operator}\label{sect3}

\begin{definition}
{\rm Two metrics $g$ and $\bar g$ on the same manifold $M$ are called {\it projectively equivalent},  if they have the same geodesics considered as unparametrized curves.
}\end{definition}

In the Riemannian case the local classification of projectively equivalent pairs $g$ and $\bar g$ was obtained by Levi-Civita in 1896 \cite{Levi-Civita}.  For  pseudo-Riemannian metrics, this problem turned out to be much more difficult.  For the most important cases, local forms for $g$ and $\bar g$ were obtained in  \cite{Aminova2, Golikov, Petrov}, but the final solution has been obtained only recently \cite{BM, BMgluing,  Boubel, Boubel2}.

 In analytic form,  the projective equivalence condition for $g$ and $\bar g$  can be written in several equivalent ways. 
One of them is based on  the $(1,1)-$tensor $A=A(g,\bar g)$ defined by
  
\begin{equation}
\label{L}
A_j^i := { \left|\frac{\det(\bar g)}{\det(g)}\right|^{\frac{1}{n+1}}} \bar g^{ik}
 g_{kj},
\end{equation}
where ${\bar g}^{ik}$ is the contravariant inverse of
${\bar g}_{ik}$. Since the metric $\bar g$  can be  uniquely reconstructed from $g$ and $A$, namely:
 \begin{equation}
 \label{gandL}
 \bar g (\cdot\, , \cdot) = \tfrac{1}{{ |\det(A)|}}g(A^{-1}\cdot\, ,\cdot)  
 \end{equation}
 the condition that $\bar g$  is geodesically equivalent to $g$ can be written as a system of PDEs on the components of $A$.     From   the point of view of partial differential equations, $A$ is more convenient than $\bar g$ as the corresponding system of
   partial differential equations on $A$ turns out to be linear  \cite{Sinjukov}. In the index-free form, it  can be written as follows  (where $\ast$ means  $g-$adjoint):
\begin{equation}
\label{main2}
\nabla_u A =\frac{1}{2} \bigl(u\otimes d\tr A + (u\otimes d\tr A )^* \bigr).
\end{equation}

\begin{definition}{\rm
\label{compatibility}
We say that a (1,1)-tensor  $A$ is {\it compatible} with $g$, if  $A$ is $g$-symmetric, nondegenerate at every point and satisfies \eqref{main2} at any point $x\in M$ and for all tangent vectors $u\in T_xM$.  
}\end{definition}

 A surprising relationship between sectional operators and geodesically equivalent metrics is explained by the following observation. Notice, first of all, that due to its algebraic symmetries   (skew-symmetry with respect to $i,j$ and $k,l$  and symmetry with respect to permutation of pairs $(ij)$ and $(kl)$),   the Riemann curvature tensor $R_{ij,kl}$  can be naturally considered as
a symmetric operator  $R: \so(g) \to \so(g)$   (strictly speaking we need to raise indices $i$ and $k$  by means of $g$ to get the tensor of the form $R^{i \  k} _{\ j \  l}$).  Equivalently such an interpretation can be obtained by using identification \eqref{identify} of
$\Lambda^2 V$  and $\so(g)$.

Thus, in the (pseudo)-Riemannian case,  a curvature tensor can be understood as a linear map
$$
R: \so(g) \to \so(g).
$$
In this setting, by the way,   the symmetry $R_{ij,kl} = R_{kl,ij}$ of the curvature tensor amounts to the fact that $R$ is self-adjoint w.r.t. the Killing form,  and ``constant curvature''  means that $R= k \cdot \mathrm{Id}$, $k=\mathrm{const}$. So this point of view on curvature tensors is quite natural.

The following observation was made in \cite{BolsMatvKios}.

\begin{theorem}
\label{BMK}
If $g$ and $\bar g$ are projectively equivalent, then the curvature tensor of $g$ considered as a linear map
$$
R:  \so(g) \to \so(g)
$$
is a sectional operator, i.e., satisfies the identity
\begin{equation}
\label{SECT2}
[R(X), A] = [X, B] \quad  \mbox{for all } X\in \so(g)  
\end{equation}
with $A$ defined by \eqref{L} 
and $B$ being the Hessian of $\frac{1}{2}\tr A$, i.e. $B=\frac{1}{2}\nabla \mathrm{grad}\, \tr A$.
\end{theorem}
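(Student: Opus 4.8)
The plan is to derive the sectional-operator identity \eqref{SECT2} by differentiating the compatibility equation \eqref{main2} once more and applying the Ricci commutation identity, which is precisely the mechanism that converts second covariant derivatives into curvature. Put $\phi=\tfrac12\tr A$, so that \eqref{main2} reads $\nabla_u A = u\otimes d\phi + (\mathrm{grad}\,\phi)\otimes g(u)$, and observe that the prescribed operator $B=\nabla\,\mathrm{grad}\,\phi$ is exactly the Hessian of $\phi$, a $g$-symmetric $(1,1)$-tensor. Since both sides of \eqref{SECT2} are linear in $X$ and, under the identification \eqref{identify}, $\so(g)\simeq\Lambda^2 V$ is spanned by decomposable bivectors, it suffices to establish \eqref{SECT2}, pointwise, for $X=u\wedge v$.

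First I would record \eqref{main2} in index form, lowering one index by $g$, as $\nabla_k A_{ij}=g_{ik}\phi_j+g_{jk}\phi_i$ with $\phi_i=\partial_i\phi$; the symmetry in $i,j$ merely reflects the $g$-symmetry of $A$. Differentiating again and using $\nabla g=0$ yields $\nabla_l\nabla_k A_{ij}=g_{ik}H_{jl}+g_{jk}H_{il}$, where $H_{jl}=\nabla_l\nabla_j\phi$ is the symmetric Hessian, i.e. the lowered form of $B$. Antisymmetrising in $k,l$ and inserting the Ricci identity $(\nabla_l\nabla_k-\nabla_k\nabla_l)A_{ij}=-R_{lki}{}^mA_{mj}-R_{lkj}{}^mA_{im}$ then gives
\begin{equation*}
-R_{lki}{}^mA_{mj}-R_{lkj}{}^mA_{im}=g_{ik}H_{jl}+g_{jk}H_{il}-g_{il}H_{jk}-g_{jl}H_{ik}.
\end{equation*}

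The last step is to read this tensor identity as the operator identity \eqref{SECT2}. Contracting both sides with $u^kv^l$, the antisymmetric index pair $l,k$ of the curvature contracts to the endomorphism which, under \eqref{identify}, is the value $R(u\wedge v)=R(X)$; the two curvature terms then assemble into the commutator $[R(X),A]$ (written with one index lowered). On the right, setting $w=Bu$ and $z=Bv$, the four terms collect into $(u_iz_j+u_jz_i)-(v_iw_j+v_jw_i)$, which is the lowered form of $[u\wedge v,B]=[X,B]$, the bracket of the bivector $X$ with the symmetric operator $B$; this is the same elementary manipulation with $u\wedge v=u\otimes g(v)-v\otimes g(u)$ that appears in the proof of Proposition~\ref{prop3}. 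Matching the two sides gives \eqref{SECT2}, and since $R$ is already self-adjoint with respect to the Killing form (the symmetry $R_{ij,kl}=R_{kl,ij}$), $R$ is a sectional operator associated with $A$ and $B$.

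I expect the genuine difficulty to be purely one of bookkeeping: pinning down the sign and ordering conventions in the Ricci identity and in \eqref{identify} so that the contracted curvature is $R(X)$ rather than $-R(X)$, and so that the antisymmetrised right-hand side is $[X,B]$ rather than its negative. Once these conventions are fixed consistently, no further idea is needed; the content of the theorem is entirely contained in the single differentiation of \eqref{main2} followed by the curvature commutation identity.
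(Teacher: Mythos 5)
Your proposal is correct and follows essentially the same route as the paper: the paper likewise obtains \eqref{SECT2} as the integrability condition of \eqref{main2}, computing $\nabla_v\nabla_u A-\nabla_u\nabla_v A-\nabla_{[v,u]}A$ (your antisymmetrised second derivative plus Ricci identity, written index-free) and identifying the left side with $[R(u\wedge v),A]$ and the right side with $[v\otimes g(u)-u\otimes g(v),B]$. Your index computation, including the regrouping $(u_iz_j+u_jz_i)-(v_iw_j+v_jw_i)$ into $[X,B]$ and the remark that self-adjointness of $R$ is just $R_{ij,kl}=R_{kl,ij}$, matches the paper's argument step for step, with only the flagged sign/normalisation conventions left implicit in both.
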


This result is, in fact,  an algebraic interpretation of  some equations on the components of curvature tensors of projectively equivalent metrics obtained in tensor form by A.\,Solodovnikov \cite{Sol}, see also   \cite{Sinjukov}.

\proof
Consider the compatibility condition for the PDE system  \eqref{main2}.   Namely, differentiate \eqref{main2}  by means of $\nabla_v$ and then compute $\nabla_v\nabla_u A - \nabla_u\nabla_v A - \nabla_{[v,u]} A$  in terms of $\tr A$:
$$
\nabla_v\nabla_u A - \nabla_u\nabla_v A - \nabla_{[v,u]} A =  [v\otimes g(u) - u\otimes g(v),  B]. 
$$
It remains to notice that the left hand side of this identity is $[R (u\wedge v), A]$.   Hence,  taking into account that bi-vectors $v\wedge u=v\otimes g(u) - u\otimes g(v)$ generate $\Lambda^2 V \simeq \so(g)$,    we get \eqref{SECT2} as required.
\endproof

Hence we immediately obtain a  strong obstruction to the existence  of a projectively equivalent partner.
\begin{corollary}
\label{cor1}
In order for $g$ to admit a projectively equivalent metric $\bar g$  (which is not proportional to $g$, i.e.,  $\bar g \ne \mathrm{const}\cdot g$), the curvature tensor of $g$ must be a sectional operator for some $A\ne  \mathrm{const}\cdot \mathrm{Id}$ and $B$. 
\end{corollary}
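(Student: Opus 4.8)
The plan is to read the corollary straight off Theorem \ref{BMK}, whose conclusion already produces the required sectional operator. By that theorem, as soon as $\bar g$ is projectively equivalent to $g$, the curvature tensor $R:\so(g)\to\so(g)$ is automatically a sectional operator associated with the \emph{specific} pair $A$ defined by \eqref{L} and $B=\tfrac12\nabla\,\mathrm{grad}\,\tr A$. So the only genuinely new assertion is that this particular $A$ is not a constant multiple of the identity; in other words, I must show that the nontriviality hypothesis $\bar g\ne\mathrm{const}\cdot g$ translates into $A\ne\mathrm{const}\cdot\mathrm{Id}$.

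I would establish this by the contrapositive, using the explicit reconstruction formula \eqref{gandL}, which expresses $\bar g$ in terms of $g$ and $A$ and exhibits $\bar g\leftrightarrow A$ as a bijection for fixed $g$. First I would assume $A=c\cdot\mathrm{Id}$ for a constant $c$ (necessarily $c\ne0$, since $A$ is nondegenerate by Definition \ref{compatibility}). Substituting into \eqref{gandL} and using $A^{-1}=c^{-1}\mathrm{Id}$ together with $\det A=c^{\,n}$ (where $n=\dim M$), the formula collapses to $\bar g=\tfrac{1}{|c|^{n}c}\,g$, a constant multiple of $g$. This contradicts the standing hypothesis $\bar g\ne\mathrm{const}\cdot g$, so $A\ne\mathrm{const}\cdot\mathrm{Id}$, which is exactly what the corollary claims.

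There is no serious obstacle here: the substantive analytic work, namely extracting the sectional identity from the compatibility conditions of the linear PDE \eqref{main2}, is entirely absorbed into Theorem \ref{BMK}. The only point that deserves care is the bookkeeping of the conformal factor in \eqref{gandL}: one must combine the $c^{-1}$ coming from $A^{-1}$ with the $|c|^{-n}$ coming from $|\det A|$ and confirm that their product is a genuine nonzero constant rather than merely a function of the point. As a sanity check, rather than a needed step, I would also note that in dimension $n\ge3$ a compatible $A$ of the scalar form $f\cdot\mathrm{Id}$ in fact forces $f$ to be constant (a short computation by inserting $A=f\cdot\mathrm{Id}$ into \eqref{main2} and comparing the two sides), so that the excluded case coincides precisely with the trivial one $\bar g=\mathrm{const}\cdot g$.
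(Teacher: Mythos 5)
Your proposal is correct and follows the same route as the paper, which states the corollary as an immediate consequence of Theorem \ref{BMK} without further argument. Your only addition is to make explicit the (omitted) triviality check via \eqref{gandL} and \eqref{main2} that $A=\mathrm{const}\cdot\mathrm{Id}$ corresponds exactly to $\bar g = \mathrm{const}\cdot g$, and that bookkeeping is accurate.
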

 
\begin{remark}
Some other links between projectively equivalent metrics and integrable systems are discussed in  \cite{benenti, MT, MT2}.
\end{remark}

\section{Projectively equivalent metrics:  Fubini theorem}\label{sect4}

Given a Riemannian metric $g$, how many geodesically equivalent metrics can $g$ admit?   Typically,  the answer is:  just metrics of the form $\bar g = \mathrm{const}\cdot g$ (this can be seen, for example, from Corollary \ref{cor1} which says that the algebraic structure of the curvature tensor of $g$ must be very special).  Levi-Civita classification theorem  gives a lot of non-trivial examples of projectively equivalent pairs $g$ and $\bar g$   (more precisely, two-parameter families of such metrics).   Can such a family be larger,  for example, three-parametric?  In the Riemannian case,  the following classical result of Fubini  \cite{Fubini1,Fubini2} clarifies the situation: if three essentially  different  metrics on an  $(n\ge 3)$-dimensional manifold $M$ share  the same unparametrized geodesics, and two of them (say, $g$ and $\bar g$) are strictly nonproportional (i.e., the roots of the characteristic polynomial $\det ( \bar g -\lambda g )$ are all distinct) at least at  one point, then they have constant sectional curvature.

Following \cite{BolsMatvKios}, we will say that two metrics $g$ and
$\bar g$ are \emph{strictly nonproportional} at a point $x\in M$, if
the $g$-symmetric (1,1)-tensor  $G=g^{-1}\bar g$ (or equivalently, the tensor $A$ defined by \eqref{L}), is regular in the sense of Remark \ref{regularcase}.  

If one of
the metrics is Riemannian, strict nonproportionality  means that all
eigenvalues of $G$ have multiplicity one and that was one of the key properties used by Fubini.   In the pseudo-Riemannian case,  this idea does not work as $G$ (and $A$) may have non-trivial Jordan blocks. However,  the conclusion of the Fubini theorem still holds for pseudo-Riemannian metrics.

\begin{theorem}[\cite{BolsMatvKios}] \label{thm1}  Let $g$, $\bar g$ and $\hat g$ be three geodesically  equivalent metrics on a connected manifold $M^n $ of dimension $n\ge 3$. Suppose   there exists a  point  at which
$g$ and $\bar g$ are strictly nonproportional,  and    a point  at which
  $g$, $\bar   g$ and $\hat g$ are linearly independent. Then, the metrics $g, \bar g $ and $\hat g$  have constant  sectional curvature.
\end{theorem}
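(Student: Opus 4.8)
The plan is to exploit the main observation of Section~\ref{sect3}, namely that the curvature tensor of each of the three metrics, viewed as a map $\so(g)\to\so(g)$, is a sectional operator. The key algebraic input will be Proposition~\ref{prop5}, which tells us that if a single symmetric operator $R$ is simultaneously a sectional operator for two non-proportional symmetric tensors, then $R$ is forced to be a scalar multiple of the identity --- precisely the algebraic characterisation of constant sectional curvature. The main subtlety is that the three curvature tensors (one per metric) are \emph{a priori} different operators, so we cannot apply Proposition~\ref{prop5} directly; we first need to relate the curvature tensors of $g$, $\bar g$, $\hat g$ to one another.

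First I would fix the point $x_0$ at which $g$ and $\bar g$ are strictly nonproportional, and work in the open set where this genuine regularity persists. By Theorem~\ref{BMK}, the curvature tensor $R$ of $g$ is a sectional operator associated with $A=A(g,\bar g)$ (defined by \eqref{L}) and with $B=\frac12\nabla\,\mathrm{grad}\,\tr A$. Likewise, applying Theorem~\ref{BMK} to the pair $(g,\hat g)$, the \emph{same} curvature tensor $R$ of $g$ is also a sectional operator associated with $A'=A(g,\hat g)$ and a corresponding $B'$. Thus the single operator $R$ satisfies the two identities \eqref{2sectiona} at once. The hypothesis that $g,\bar g,\hat g$ are linearly independent at some point should translate, via the definition \eqref{L} of $A$ and $A'$, into the statement that $A$ and $A'$ are not proportional modulo the identity matrix; this is the step where I would need to check carefully that ``linear independence of the three metrics'' is exactly the non-degeneracy condition demanded by Proposition~\ref{prop5}.

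With both hypotheses of Proposition~\ref{prop5} verified, and using that $A$ is regular (strict nonproportionality at $x_0$), the proposition yields $R=k\cdot\mathrm{id}$ at that point for some scalar $k$. Since $R=k\cdot\mathrm{id}$ is exactly the algebraic meaning of constant sectional curvature (as noted in Section~\ref{sect3}), $g$ has constant sectional curvature at $x_0$, hence on the open regular set by continuity, and then on all of the connected $M^n$ by a standard analyticity/propagation argument. Because $\bar g$ and $\hat g$ are projectively equivalent to a space of constant curvature, they too have constant sectional curvature (this last implication being a known fact about projective equivalence with a constant-curvature metric).

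The main obstacle I anticipate is twofold. First, Proposition~\ref{prop5} is a \emph{pointwise} algebraic statement, so it only delivers $R=k\cdot\mathrm{id}$ at points where all the hypotheses hold simultaneously; propagating ``constant sectional curvature'' from such a (possibly small) open set to the whole connected manifold $M^n$ requires a separate argument --- presumably invoking real-analyticity of solutions to the linear overdetermined system \eqref{main2}, or a connectedness/openness-closedness argument showing that the set of regular points is dense and that $k$ is locally constant. Second, one must confirm that the two sectional-operator structures really do share the \emph{same} operator $R$: this is automatic here since $R$ is intrinsically the curvature of $g$ and is independent of which partner metric we pair $g$ with. Verifying that the linear-independence hypothesis precisely matches the non-proportionality assumption of Proposition~\ref{prop5} --- and that the dimension condition $n\ge 3$ is what guarantees a nonzero $A\ne\mathrm{const}\cdot\mathrm{Id}$ --- will require the careful translation described above, and is where the geometric and algebraic hypotheses must be matched up with care.
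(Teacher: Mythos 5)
Your proposal follows essentially the same route as the paper's proof: apply Theorem~\ref{BMK} to the pairs $(g,\bar g)$ and $(g,\hat g)$ so that the single curvature operator $R$ of $g$ satisfies both identities in \eqref{2sectiona}, translate linear independence of the metrics into non-proportionality of $A$ and $A'$ modulo the identity, invoke Proposition~\ref{prop5} with $A$ regular to get $R=k(x)\cdot\mathrm{id}$, and then globalise. The only cosmetic differences are that the paper uses the classical Schur-type fact (in dimension $\ge 3$, $R=k(x)\cdot\mathrm{id}$ forces $k=\mathrm{const}$) rather than an analyticity argument, and, like you, it defers the density-of-generic-points issue to \cite{BolsMatvKios}.
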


\proof  We simply use the uniqueness property for sectional operators (see Proposition~\ref{prop5}). Assume that we have three geodesically equivalent metrics  $g$, $\bar g$, and $\hat g$ and choose a generic point $x\in M$.
Then by Theorem \ref{BMK}, the Riemann curvature tensor $R$ of the metric $g$ at $x\in M$ satisfies at the same time two identities:
\begin{equation}
[ R(X), A]=[X, B]\qquad \mbox{and}  \qquad [ R(X), A']=[X, B'].
\label{2sectional}
\end{equation}

Here we assume that $A$ and $A'$ are not proportional modulo the identity matrix (otherwise we would have  $\hat g= \lambda \bar g + \mu g$ which is not the case) and $A$ is regular due to strict non-proportionality of $g$ and $\bar g$.
From now on,  we may forget about the geometric meaning of $A,B, A', B'$ and start thinking of them as just certain $g$-symmetric operators.  After this we simply apply Proposition \ref{prop5} to conclude that $R = k(x)\cdot \mathrm{id}$, i.e.  the sectional curvature of $g$ is constant in all directions.   The fact that this constant $k$ does not depend of a point $x$, follows from the well-known fact  that  if  $\dim M\ge 3$ then $R = k(x)\cdot \mathrm{id}$ implies that $k(x)=\mathrm{const}$.  

This gives a proof in local setting, i.e. in a neighbourhood of a generic point where the above mentioned  algebraic conditions on $A$ and $A'$ are satisfied.  
The fact that the set of such points is open and everywhere dense in $M$ is not obvious and needs additional arguments (see \cite{BolsMatvKios}).  \endproof

\section{New holonomy groups in pseudo-Riemannian geometry}\label{sect5}

In this section, we discuss the results and ideas developed in \cite{bolstsonev}.

Let $M$ be a smooth manifold endowed with an affine symmetric connection $\nabla$.  Recall that the {\it holonomy group of $\nabla$} is a subgroup
$\mathrm{Hol}(\nabla) \subset \GL(T_x M)$ that consists of the linear operators $A:T_x M \to T_x M$ being  ``parallel transport transformations''
along closed loops $\gamma$ with $\gamma(0)=\gamma(1)=x$.

Holonomy groups were introduced by \'Elie Cartan in the twenties  \cite{Ca1, Ca2} for the study of Riemannian symmetric spaces  and since then the classification of holonomy groups has remained one of the classical problems in differential geometry. The fundamental results in this direction are due to Marcel Berger \cite{Ber} who initiated the programme of classification of Riemannian  and irreducible holonomy groups which was  completed by  
D.~V.~Alekseevskii \cite{Alexeevski}, 
R.~Bryant \cite{Br1, Br2},  D.~Joyce \cite{Joy1, Joy2, Joy3},  L.~Schwachh\"ofer, S.~Merkulov \cite{MerkSchw}.  Very good historical surveys can be found in \cite{Br4, Sch2}.

The classification of Lorentzian holonomy groups has recently been obtained by T.~Leistner \cite{Le1} and A.~Galaev \cite{Gal2}. However, in the general pseudo-Riemanian case,  the complete  description of holonomy groups is a very difficult problem which still remains open,  and even particular examples  are of interest (see \cite{Bergery,  Boubel3, Gal1, Gal3, Ike}). We refer to \cite{GalaevLeistner}  for more information on recent development in this field.

The following theorem describes a new series of holonomy groups on pseudo-Riemannian manifolds.  As we shall see, the proof of this result essentially uses the concept and properties of sectional operators.

\begin{theorem}[\cite{bolstsonev}]
\label{main}
For every $g$-symmetric operator $A:V\to V$, the identity connected component $G^0_A$ of  its centraliser   in $\OO(g)$
$$
G_A=\{ X\in \OO(g)~|~ XA=AX\}
$$
is a holonomy group for a certain (pseudo)-Riemannian metric $g$.
\end{theorem}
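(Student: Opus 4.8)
The statement is a \emph{realization} result, so the plan is to build an explicit (pseudo)\nobreakdash-Riemannian metric whose holonomy is exactly $G_A^0$. The organizing principle is the Ambrose--Singer theorem: the holonomy algebra at a point is the Lie algebra generated by all curvature operators $R(u\wedge v)$ together with their covariant derivatives, parallel\nobreakdash-transported to that point. Thus I need a metric $g$ for which these operators all lie in $\goth g_A=\{Y\in\so(g)\mid [Y,A]=0\}$, the Lie algebra of $G_A$, and for which they generate \emph{all} of $\goth g_A$. The first (upper\nobreakdash-bound) requirement is automatic as soon as $A$ is a parallel tensor, $\nabla A=0$: the Ricci identity then gives $[R(u\wedge v),A]=\nabla_u\nabla_v A-\nabla_v\nabla_u A=0$, so every $R(u\wedge v)\in\goth g_A$ and hence $\mathrm{Hol}\subseteq G_A^0$. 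On the level of sectional operators this is precisely the case $B=0$ of Theorem~\ref{BMK}, recorded in Proposition~\ref{prop4}.

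To turn the inclusion into an equality I would use a symmetric\nobreakdash-space model. On $\goth q=\mathfrak h\oplus\mathfrak m$ with $\mathfrak h:=\goth g_A$ and $\mathfrak m:=V$, define a bracket by taking the given Lie bracket on $\mathfrak h$, the tautological action of $\goth g_A\subset\so(g)$ on $V$, and $[u,v]:=R_0(u\wedge v)\in\goth g_A$ on $\Lambda^2V$, where $R_0(X)=\frac{d}{dt}\big|_{t=0}\,p_{\mathrm{min}}(A+tX)$. The Jacobi identity splits into three mixed cases: the $(\mathfrak h,\mathfrak h,\cdot)$ cases hold because $\mathfrak h$ is a Lie algebra acting linearly; the $(V,V,V)$ case is exactly the Bianchi identity of Proposition~\ref{prop3}; and the $(\mathfrak h,V,V)$ case is the $\goth g_A$\nobreakdash-equivariance of $R_0$, which holds because $R_0$ is manufactured out of $A$ and is therefore intertwined by every operator commuting with $A$. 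Since $\mathfrak h\subset\so(g)$ preserves $g$, the pair $(\goth q,\mathfrak h)$ is an orthogonal symmetric Lie algebra whose associated symmetric space carries the metric $g$, a parallel $A$ (parallel because $A$ commutes with the whole isotropy $\mathfrak h=\goth g_A$), and curvature $R_0$. For a symmetric space $\nabla R=0$, so Ambrose--Singer yields $\mathrm{Hol}^0=\mathrm{Im}\,R_0$.

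It remains to arrange $\mathrm{Im}\,R_0=\goth g_A$. Here I would first reduce to one eigenvalue: the splitting $V=\oplus_iV_{\lambda_i}$ gives $\goth g_A=\oplus_i\goth g_{A_i}$ by \eqref{gA} and $G_A^0=\prod_i(G_{A_i})^0$, and since the holonomy of a (pseudo)\nobreakdash-Riemannian product is the product of holonomies (de~Rham), it suffices to treat each block $A_i=\lambda_i\,\mathrm{Id}+N_i$; replacing $A_i$ by $N_i$ does not change the centraliser, so the core case is $A$ nilpotent. For a single Jordan block, and more generally under the Jordan condition of the Remark following Proposition~\ref{prop4}, one has $\mathrm{Im}\,R_0=\goth g_A$ and the symmetric\nobreakdash-space construction closes. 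In the remaining cases, where $\mathrm{Im}\,R_0\subsetneq\goth g_A$, I would abandon the parallel\nobreakdash-curvature (symmetric) model and instead let the missing directions of $\goth g_A$ be produced by covariant derivatives of the curvature: one deforms the metric within the class $\nabla A=0$ (so that $\mathrm{Hol}\subseteq G_A^0$ is preserved) until the Ambrose--Singer span of $R$ and its derivatives exhausts $\goth g_A$, using the block description \eqref{gA} as the explicit target to fill in.

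The main obstacle is exactly this last step --- showing the holonomy is not \emph{too small}, i.e. that the curvature together with its derivatives generates all of $\goth g_A$ rather than merely $\mathrm{Im}\,R_0$. When every eigenvalue of $A$ has at most two Jordan blocks this is automatic, but for several blocks of differing sizes the nilpotent part of $\goth g_{A_i}$ is too rich for any single algebraic curvature operator to fill, which forces the construction beyond symmetric spaces and makes the derivative terms genuinely necessary; this is the source of the \emph{new} holonomy groups. Everything else --- the upper bound $\mathrm{Hol}\subseteq G_A^0$, the Jacobi/Bianchi verification, and the product reduction --- is routine given the algebraic properties of sectional operators established above. The detailed realization is carried out in \cite{bolstsonev}.
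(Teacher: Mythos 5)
Your upper bound ($\nabla A=0\Rightarrow \mathrm{Hol}\subseteq G_A^0$) and your treatment of the favourable case are sound, and in that case your route is genuinely different from the paper's: you build an orthogonal symmetric pair $(\goth g_A\oplus V,\goth g_A)$ with $[u,v]:=R_0(u\wedge v)$, checking Jacobi via the Bianchi identity (Proposition~\ref{prop3}) and the $\goth g_A$-equivariance of $R_0$, whereas the paper works with the quadratic ansatz $g_{ij}(x)=g^0_{ij}+\mathcal B_{ij,pq}x^px^q$, $A=\mathrm{const}$ (Shirokov coordinates), prescribing the curvature at one point via the explicit tensor \eqref{r3}. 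When $\mathrm{Im}\,R_0=\goth g_A$ your symmetric-space model does close, and it is arguably cleaner there.

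However, the case you defer --- an eigenvalue of $A$ with three or more Jordan blocks of non-maximal size, which is exactly where the \emph{new} holonomy groups live --- contains a genuine gap, and your fallback plan rests on a false premise. You claim that in this case ``the nilpotent part of $\goth g_{A_i}$ is too rich for any single algebraic curvature operator to fill,'' so that covariant derivatives of curvature must supply the missing directions. This is contradicted by the paper's Proposition~\ref{prop:berger}: the block-wise sum $R_{\mathrm{formal}}=\sum_{\alpha\le\beta}R_{\alpha\beta}$ of \eqref{rxmin2}, where each $R_{\alpha\beta}$ is built by formula \eqref{rxmin1} from the minimal polynomial of $A|_{V_\alpha\oplus V_\beta}$ restricted to a \emph{pair} of Jordan blocks (where the two-block condition holds) and extended by zero, is itself a single formal curvature tensor whose image is exactly $\goth g_A$. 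Moreover, your plan to ``deform the metric until derivatives of $R$ exhaust $\goth g_A$'' misreads the structure of the problem: by Ambrose--Singer, $\goth{hol}$ is spanned by parallel transports of curvature evaluations, each of which is the image of a formal curvature tensor valued in $\goth{hol}$ --- this is precisely Berger's test. So if no formal curvature tensor valued in $\goth g_A$ could span it, $G_A^0$ would simply fail to be a holonomy group; derivative terms cannot rescue the span, and in any event you give no construction, only a hope. The missing idea is exactly the pairwise-block summation: it reduces the ``bad'' multi-block case to the already-solved two-block case algebraically, and the realisation then goes through by linearity, taking $B=\sum_{\alpha\le\beta}B_{\alpha\beta}$ in the quadratic ansatz (geometrically, each $B_{\alpha\beta}$ gives a product metric $g_{\alpha\beta}\times g_{\mathrm{flat}}$), with the single-point curvature image already equal to $\goth g_A$.
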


Notice that in the Riemannian case this theorem becomes trivial:  $A$ is diagonalisable and the connected component  $G^0_A$ of its centraliser  is isomorphic to the standard direct product $\SO(k_1)\oplus \dots \oplus \SO(k_m) \subset  \SO(n)$, $\sum k_i\le n$,  which is, of course,  a holonomy group.  In the pseudo-Riemannian case,  $A$ may have non-trivial Jordan blocks  (moreover, any combination of Jordan blocks is allowed)  and the structure of $G^0_A$ becomes more complicated.


\proof

We follow the traditional approach to the problem of description of holonomy groups based on the notion of a Berger (sub)algebra.

\begin{definition} {\rm A map $R: \Lambda^2V \to \gl(V)$ is called a {\it formal curvature tensor} if it satisfies the Bianchi identity
\begin{equation}
R(u\wedge v)w + R(v\wedge w)u + R(w \wedge u)v = 0 \qquad \mbox{for all} \ u,v,w\in V.
\end{equation}
}
\end{definition}

This definition simply means that $R$ as a tensor of type $(1,3)$  satisfies all usual algebraic properties of  curvature tensors:
$$
R_{k\, ij}^m=R_{k\, ji}^m \quad \mbox{and}  \quad R_{k\, ij}^m+R_{i\, jk}^m+R_{j \, ki}^m=0.
$$

\begin{definition}\label{defBerger}  Let $\goth h\subset \gl(V)$ be a Lie subalgebra.  Consider the set of all formal curvature tensors $R: \Lambda^2V \to \gl(V)$ such that $\mathrm{Im}\, R\subset \goth h$:
$$
\mathcal R(\goth h)=\{ R: \Lambda^2V \to \goth h ~|~
R(u\wedge v)w + R(v\wedge w)u + R(w \wedge u)v = 0, \   u,v,w\in V\}.
$$
We say that $\goth h$ is a {\it Berger algebra} if it is generated as a vector space by the images of the formal curvature tensors $R\in \mathcal R(\goth h)$, i.e.,
$$
\goth h = \mathrm{span} \{ R(u\wedge v)~|~ R\in \mathcal R(\goth h), \ u,v\in V\}.
$$
\end{definition}

Berger's test  (which is sometimes referred to as Berger's criterion) is the following property of holonomy groups:

\medskip
{\it Let $\nabla$ be a  symmetric affine connection on $TM$.  Then the Lie algebra $\goth{hol} \, (\nabla)$ of its holonomy group $\mathrm{Hol} \, (\nabla)$ is {\it Berger}.}
\medskip

Usually the solution of the description problem for holonomy groups consists of two parts. First,  one tries to describe all Lie subalgebras $\goth h\subset \gl(n,\R)$ of a certain type satisfying Berger's test  (i.e., Berger algebras). This part is purely algebraic. The second (geometric) part is to find
a suitable connection $\nabla$ for a given Berger algebra $\goth h$ which realises  $\goth h$ as the holonomy Lie algebra, i.e., $\goth h=\goth{hol}\, (\nabla)$.

 We follow the same scheme but will use, in addition,  some ideas from projective differential geometry. As a particular case of projectively equivalent metrics $g$ and $\bar g$ one can distinguish the following.

\begin{definition}
{\rm  Two metrics $g$ and $\bar g$ are said to be {\it affinely equivalent}  if
their geodesics coincide  as parametrized curves.
}\end{definition}

It is not hard to see that this condition simply means that the Levi-Chivita connections $\nabla$ and $\bar\nabla$ related to $g$ and $\bar g$ are the same, i.e., $\nabla=\bar\nabla$ or,  equivalently,
$$
\nabla \bar g = 0.
$$

If instead of $\bar g$ we introduce a linear operator $A$  (i.e.  tensor field of type $(1,1)$) using the standard one-to-one correspondence $\bar g \leftrightarrow A$ between symmetric bilinear forms and $g$-symmetric operators:
$$
\bar g(\xi, \eta) = g(A\xi, \eta),
$$
then the classification of affinely equivalent pairs $g$ and $\bar g$  is equivalent to the classification of pairs  $g$ and $A$, where $A$ is covariantly constant w.r.t. the Levi-Civita connection $\nabla$ related to $g$ \footnote{The classification of such pairs has been recently obtained by C.~Boubel \cite{Boubel}.}.

On the other hand, the existence of a covariantly constant  $(1,1)$-tensor field $A$  can be interpreted in terms of the holonomy group  $\mathrm{Hol}(\nabla)$:
\medskip

{\it The connection $\nabla$ admits a covariantly constant $(1,1)$-tensor field if and only if
$\mathrm{Hol}(\nabla)$  is  a subgroup of the centralizer of $A$ in $\SO(g)$}:
$$
\mathrm{Hol}(\nabla)\subset G_A = \{ X\in \SO(g)~|~  XAX^{-1}=A\}.
$$

In this formula, by $A$ we understand the value of the desired $(1,1)$-tensor filed at any fixed point $x_0\in M$.  Since $A$ is supposed to be covariantly constant,  the choice of $x_0\in M$ does not play any role.

It is natural to conjecture that for a generic metric $g$ satisfying $\nabla A=0$, its holonomy group coincides with $G_A$ (or its identity component) exactly.  That is just another interpretation of the statement of our theorem. In other words, we want to construct (local) examples of  pseudo-Riemannian metrics  that admit covariantly constant (1,1)-tensor fields with a given algebraic structure, and to check that their holonomy group is the largest possible,  i.e. coincides with $G_A^0$.  As usual, it will be more convenient to deal with the corresponding Lie algebra $\goth g_A$.

If we formally apply  Theorem~\ref{BMK} to affinely equivalent metrics $g$  and $\bar g$  (or equivalently to the pair $g$, $A$)\footnote{The operator $A$ we use in this section is slightly different from that in Section \ref{sect3}, but the final conclusion will be the same.} and use the fact that $\tr A = \mathrm{const}$, we will see that   $R$  satisfies a simpler equation
$$
[R(X), A] =0,
$$
which, of course, directly follows from $\nabla A=0$ and seems to make all the discussion above not relevant to our very particular situation. 
However,  as we know from Proposition~\ref{prop4}, formula \eqref{rx}   still defines a non-trivial operator, if  $p(t)$ is a non-trivial  polynomial satisfying $p(A)=0$, for example,  the minimal polynomial $p_{\mathrm{min}}(\cdot )$ for  $A$.

Thus, this discussion gives us a very good candidate for the role of a formal curvature tensor  to verify the condition of Berger's test. Indeed, consider the sectional operator (associated with the given $A$ and $B=0$)  defined by 
\begin{equation}
R:\so(g) \to \so(g),  \qquad R(X) = \frac{d}{dt}\left. p_{\mathrm{min}}(A+tX) \right|_{t=0} 
\label{rxmin1}
\end{equation}

Using the natural identification  \eqref{identify}  of $\Lambda ^2V$ with $\so(g)$ and
Proposition \ref{prop3}, we see immediately that this operator is  a formal curvature tensor.  According to Proposition~\ref{prop4},  the image of this operator belongs to $\goth g_A$ and, moreover, coincides with $\goth g_A$ if $A$ satisfies certain algebraic conditions, in particular, if for each of eigenvalue of $A$ there are at most two Jordan blocks.  In the context of Berger's criterion,  this means that under these additional assumptions on $A$, the algebra $\goth g_A$ is Berger.  

To prove this result for an arbitrary $A$, it is sufficient to use the $g$-orthogonal decomposition $V=\oplus V_\alpha$ of $V$ into invariant subspaces corresponding to the Jordan blocks $J_\alpha$'s of $A$.  Such a decomposition  always exists, see \cite{lancaster, leep}, and it induces a natural partition of $\so(g)=\oplus_{\alpha \le \beta} \goth v_{\alpha\beta}$ into invariant subspaces of $R$  (similar to the partition $\so(g) = \oplus_{i\le j} \goth m_{ij}$ from Proposition \ref{prop6} and, more precisely, a subpartition of it).  After this one can continue working with each pair of Jordan blocks separately and construct an operator 
$$
R_{\alpha\beta} : \so (g, V_{\alpha}\oplus V_\beta) \to \so (g, V_{\alpha}\oplus V_\beta)
$$ 
by using the same formula \eqref{rxmin1} with the minimal polynomial of the matrix $A|_{V_{\alpha}\oplus V_\beta}$ consisting just of these two Jordan blocks.  
This operator, $R_{\alpha\beta}$ then can be naturally extended to the whole algebra $\so(g)$ by letting it to be zero on the natural complement of  $\so (g, V_{\alpha}\oplus V_\beta)$ in $\so(g)=\so(g, V)$.

Finally we set:
\begin{equation}
R_{\mathrm{formal}} = \sum_{\alpha\le \beta} R_{\alpha\beta}:\so(g) \to \so(g),  
\label{rxmin2}
\end{equation}
The operator so obtained is just a block-wise modification of \eqref{rxmin1},  the only difference is that now the minimal polynomial is appropriately chosen for each particular invariant subspace $\goth v_{ij}$. 

\begin{proposition}
\label{prop:berger}
The operator $R_{\mathrm{formal}}$ defined by \eqref{rxmin2} is a formal curvature tensor whose image coincides with $\goth g_A$.
In particular, the Lie algebra $\goth g_A$ is Berger.
\end{proposition}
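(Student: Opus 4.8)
The plan is to establish the two asserted properties of $R_{\mathrm{formal}}$ in turn---first that it satisfies the Bianchi identity, then that its image is exactly $\goth g_A$---and to read off the Berger property from the second. Throughout I would work with the $\mathrm{ad}_A$-invariant splitting $\so(g)=\oplus_{\alpha\le\beta}\goth v_{\alpha\beta}$ induced by the Jordan decomposition $V=\oplus_\alpha V_\alpha$, and exploit that each summand $R_{\alpha\beta}$ is of the form $\frac{d}{dt}\big|_{t=0}\,p(A+tX)$ on the pair $V_\alpha\oplus V_\beta$, so that Propositions \ref{prop3} and \ref{prop4} apply verbatim on that subspace.

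For the Bianchi identity I would check that each $R_{\alpha\beta}$, extended by zero, is a formal curvature tensor on all of $V$, and then invoke linearity. The key observation is that the $g$-orthogonal projection of a bivector $u\wedge v$ onto $\so(g,V_\alpha\oplus V_\beta)$ equals $P_{\alpha\beta}(u)\wedge P_{\alpha\beta}(v)$, where $P_{\alpha\beta}$ is the orthogonal projection onto $V_\alpha\oplus V_\beta$. Since the image of $R_{\alpha\beta}$ lies in $\so(g,V_\alpha\oplus V_\beta)$ and therefore annihilates the orthogonal complement, the cyclic Bianchi sum on $V$ collapses to the corresponding sum for $R_{\alpha\beta}$ on the subspace applied to $P_{\alpha\beta}(u),P_{\alpha\beta}(v),P_{\alpha\beta}(w)$, which vanishes by Proposition \ref{prop3}. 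Summing preserves the identity.

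For the image, the inclusion $\mathrm{Im}\,R_{\alpha\beta}\subseteq\goth g_{A|_{V_\alpha\oplus V_\beta}}\subseteq\goth g_A$ is immediate (an operator commuting with $A$ on a pair of blocks and vanishing elsewhere commutes with $A$ on $V$), giving $\mathrm{Im}\,R_{\mathrm{formal}}\subseteq\goth g_A$. For the reverse inclusion I would use $\goth g_A=\oplus_{\alpha\le\beta}(\goth g_A\cap\goth v_{\alpha\beta})$. The diagonal pieces are trivial: on a single Jordan block the nilpotent part $N$ is $g$-symmetric, hence every power $N^k$ is symmetric and the skew part of the centraliser $\mathbb C[N]$ is zero, so $\goth g_A\cap\goth v_{\gamma\gamma}=\goth g_{A_\gamma}=\{0\}$. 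Consequently the diagonal summands $R_{\gamma\gamma}$ vanish outright and $R_{\mathrm{formal}}$ reduces to $\sum_{\alpha<\beta}R_{\alpha\beta}$. Here the reduction to \emph{pairs} is decisive: $A|_{V_\alpha\oplus V_\beta}$ has at most two Jordan blocks per eigenvalue, so by the remark after Proposition \ref{prop4} the image of $R_{\alpha\beta}$ is the full centraliser of the pair; since $R_{\alpha\beta}$ respects the sub-splitting, its restriction to the off-diagonal inputs $\goth v_{\alpha\beta}$ surjects onto $\goth g_A\cap\goth v_{\alpha\beta}$. As an off-diagonal input is seen by no other summand, there is neither interference nor cancellation, and summing over pairs yields $\mathrm{Im}\,R_{\mathrm{formal}}=\goth g_A$.

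Finally, since $R_{\mathrm{formal}}\in\mathcal R(\goth g_A)$ and the bivectors $u\wedge v$ span $\so(g)$, the span of $\{R_{\mathrm{formal}}(u\wedge v)\}$ is precisely $\mathrm{Im}\,R_{\mathrm{formal}}=\goth g_A$, which is the defining condition for $\goth g_A$ to be Berger. The main obstacle, and the real content of the argument, is the surjectivity onto the off-diagonal blocks: it is exactly there that one must invoke the two-block criterion of Proposition \ref{prop4}, which is why the construction is organised block-pair by block-pair rather than globally---a single global minimal polynomial would permit an eigenvalue to carry more than two blocks, in which case $\mathrm{Im}\,R_0$ could fall strictly inside $\goth g_A$. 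The vanishing of the diagonal centralisers is what makes the separately-built pair operators block-diagonal on the partition, so that their sum has image the full $\goth g_A$ with no further estimates needed.
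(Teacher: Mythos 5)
Your proof is correct and follows essentially the same route as the paper, which establishes Proposition \ref{prop:berger} through the discussion preceding it: the $g$-orthogonal Jordan-block decomposition $V=\oplus V_\alpha$, Proposition \ref{prop3} for the Bianchi identity of each $R_{\alpha\beta}$, and Proposition \ref{prop4} together with the two-block remark after it for the identification of the image, reduced pair-by-pair exactly as you organise it. The details you supply where the paper says the facts ``can be easily verified'' --- that the orthogonal projection of $u\wedge v$ onto $\so(g,V_\alpha\oplus V_\beta)$ is $P_{\alpha\beta}(u)\wedge P_{\alpha\beta}(v)$ so extension by zero preserves the Bianchi identity, that the diagonal summands $R_{\gamma\gamma}$ vanish because a single Jordan block is regular on its subspace (so its centraliser consists of polynomials in $A_\gamma$, all $g$-symmetric, whence $\goth g_{A_\gamma}=\{0\}$), and that distinct off-diagonal inputs $\goth v_{\alpha\beta}$ are seen by no other summand so the images add up without cancellation --- are all accurate and consistent with the paper's intended argument.
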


The next step is a geometric realisation of this Berger algebra.
In other words,  for a given operator $A: V \to V$,  where $V$ is identified with the tangent space of a manifold $M$ at some fixed point $x_0$, we need to find a (pseudo)-Riemannian metric $g$ on $M$ and a $(1,1)$-tensor field $A(x)$  (with the initial condition $A(x_0)=A$) such that
\begin{enumerate}
\item $\nabla A(x)= 0$;
\item $\mathfrak{hol}\,(\nabla)=\goth g_A$.
\end{enumerate}

Notice that the first condition guarantees that $\mathfrak{hol}\,(\nabla) \subset \goth g_A$. On the other hand,  it is well known  (Ambrose-Singer theorem) that the image of the curvature operator $R_g(x_0)$ is contained in 
$\mathfrak{hol}\,(\nabla)$. Thus, taking into account Proposition \ref{prop:berger}, the second condition can be replaced by
\medskip

$2'$. $R_g(x_0)$ coincides with the formal curvature tensor   $R_{\mathrm{formal}}$ \eqref{rxmin2}.

\medskip

Thus, our goal  is to construct  (at least one example of)  $A(x)$ and $g(x)$ satisfying conditions 1 and $2'$.  To that end, we are going to use some special ansatz for $A$ and $g$. Namely we will assume that $A(x)$ does not depend on $x=(x_1,\dots, x_k)$ at all  (as was proved by A.\,P.~Shirokov  \cite{Shirokov}, such a coordinate system always exists if $\nabla A=0$), i.e.,
$$
A(x)=A=\mathrm{const}
$$
and $g$ is quadratic in $x$,  more precisely,
\begin{equation}
\label{eq0}
g_{ij} (x)= g_{ij}^0 + \sum  \mathcal B_{ij, pq}x^p x^q
\end{equation}
where $ \mathcal B$ satisfies obvious symmetry relations, namely,  $ \mathcal B_{ij,pq}= \mathcal B_{ji,pq}$ and $ \mathcal B_{ij,pq}= \mathcal B_{ij, qp}$.

Thus, our goal is to find $\mathcal B_{ij,pq}$.  It will be more convenient for us to replace $\mathcal B_{ij,pq}$ with $B^{i\ , p}_{\ j, \ q}=
g_0^{i\alpha} g_0^{p\beta}\mathcal B_{\alpha j,\beta q}$ and consider  this $B$ as a linear map
$$
B: \gl(V) \to \gl(V) \  \mbox{defined by }   B(X)^i_q = B^{i\ , p}_{\ j, \ q} X_p^j,
$$
where $V$ is understood as the tangent space at the origin $x_0=0$.

We want $g$ defined by \eqref{eq0} to satisfy the following three conditions:
\begin{enumerate}
\item $A$ is $g$-symmetric;
\item $\nabla A=0$;
\item $R_g(x_0)=R_{\mathrm{formal}}$,  where $x_0=0$ in our local coordinates.
\end{enumerate}
It can be easily checked that in terms of $B$, these conditions can be rewritten  respectively  as
\begin{equation}
\label{gsymm}
A B(X) = B(AX)\quad\mbox{for any}\quad  X\in \gl(V),
\end{equation}
\begin{equation}
\label{nablaA}
[B(X), A] + [B(X), A]^* = 0\quad\mbox{for any}\quad  X\in \gl(V),
\end{equation}
\begin{equation}
\label{realcurv}
R_{\mathrm{formal}} (X) = -B(X) + B(X)^*,   \quad X\in \so(g,V).
\end{equation}

The last formula \eqref{realcurv}, in fact,  shows  that $B$ can be understood as the extension of $R_{\mathrm{formal}}$ from $\so(g,V)$ to $\gl(V)$
(with factor $-\frac{1}{2}$).   In our case, such a natural extension indeed exists and can be defined by the formal expression  $B=-\frac{1}{2}R_{\mathrm{formal}} (\otimes)
$ which can be explained as follows.  Assume for simplicity that $R_{\mathrm{formal}}$ is defined by \eqref{rxmin1} with
$p_{\mathrm{min}}(t) = \sum_{m=0}^n a_m t^m$.  Then $R_{\mathrm{formal}}(X)$ can be written as
$$
 \frac{d}{dt} \big| _{t=0}\left( \sum_{m=0}^n a_m (A+t\cdot X)^m \right)= 
\sum_{m=0}^n a_m \sum_{j=0}^{m-1} A^{m-1-j} X A^{j},
$$
If in this expression we formally  substitute $\otimes$ instead of $X$ (and use the factor of $-\frac{1}{2}$), we obtain a desired tensor of type $(2,2)$:
\begin{equation}
B = - \frac{1}{2}\cdot \sum_{m=0}^n a_m \sum_{j=0}^{m-1} A^{m-1-j} \otimes  A^{j}.
\label{r3}
\end{equation}

Notice that $B(X)$ for $X\in \gl(V)$ is obtained from this this expression by replacing back $\otimes$ with $X$.
After this remark, the verification of \eqref{gsymm}, \eqref{nablaA}, \eqref{realcurv} is straightforward\footnote{It is interesting to notice that  \eqref{nablaA}
follows immediately from Proposition \ref{prop4} as in its proof we did not use that fact that $X$ was skew-symmetric, the conclusion of Proposition \ref{prop4} still holds for any $X\in \gl(V)$.} and  the realisation part is completed.   However,  in general,  $R_{\mathrm{formal}}$ is a combination of operators $R_{\alpha\beta}$ related to each pair of Jordan blocks of $A$.  But this does not represent any serious difficulty because we can use the same idea and  set $B=\sum_{\alpha\le\beta} B_{\alpha\beta}$  where $B_{\alpha\beta}$  are the tensors constructed from $R_{\alpha\beta}$. Since the equations \eqref{gsymm}, \eqref{nablaA}, \eqref{realcurv} are linear in $B$ in the natural sense,  the conclusion, we need, will obviously hold for the sum $B=\sum_{\alpha\le\beta} B_{\alpha\beta}$. Geometrically,  $B_{\alpha\beta}$ defines a direct product metric $g_{\alpha\beta}\times g_{\mathrm{flat}}$, where  $g_{\alpha\beta}$ is the metric on the sum of the subspace $V_\alpha\oplus V_\beta$ corresponding to the chosen pair of Jordan blocks and $g_{\mathrm{flat}}$ is the flat metric on the orthogonal complement to $V_\alpha\oplus V_\beta$ whose components are  constant in our local coordinates. This completes the proof.  \qed

\section{On the Yano-Obata conjecture for c-projective vector fields}\label{sect6}

In the paper \cite{BMR} we use sectional operators for studying global properties of c-projectively equivalent metrics. 
I would like to briefly mention some of our observations here as they could possibly lead to further applications of sectional operators in geometry.

\begin{definition}
{\rm
A curve  $\gamma (t)$ on a K\"ahler manifold $(M, g, J)$ is called  $J$-planar, if
$$
\nabla_{\gamma} \dot \gamma = \lambda \dot \gamma,
$$
where $\lambda \in \mathbb C$ is a complex number (depending on $t$), or equivalently
$$
\nabla_{\gamma} \dot \gamma = \alpha \dot \gamma + \beta J\dot\gamma
$$
where $\alpha,\beta\in\R$ , and $J$ is the complex structure on $M$.
}
\end{definition}

\begin{definition}
{\rm
Two K\"ahler metrics $g$ and $\hat g$ on a complex manifold $(M,J)$ are called {\it c-projectively equivalent}, if they have the same $J$-planar curves.
}
\end{definition}

The properties of c-projectively equivalent metrics are in many ways similar to those of metrics that are projectively equivalent in usual sense (cf. Section~\ref{sect3}).
By analogy with \eqref{L}, we can introduce a linear operator 
$$
A  = \left(\frac{\det \hat g}{\det g}\right)^{\frac{1}{2(n+1)}} \cdot \hat g^{-1} g,
$$
where $n=\dim_{\mathbb C}M$. Equivalently, $\hat g = (\det A)^{-\frac{1}{2}} gA^{-1}$. Notice that $A$ is hermitean  w.r.t. both $g$ and $\hat g$. 

We will say that  $g$ and  $A$ are c-compatible, if $A$ is hermitean and  $g$ and $\hat g$ are c-projectively equivalent.  The following result was proved in \cite{DomMik} (cf. the compatibility condition \eqref{main2}).

\begin{theorem}
A K\"ahler metric $g$ and a hermitean operator $A$ are c-compatible if and only if
\begin{equation}
\nabla_u A = \mathrm{pr}_{\mathbb C}  \bigl( u \otimes d\, \tr A \bigr),
\end{equation}
where $\mathrm{pr}_{\mathbb C}$ denotes the orthogonal projection to the subspace of hermitean operators.
\end{theorem}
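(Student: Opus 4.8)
The plan is to mirror, step by step, the derivation of the real projective compatibility condition \eqref{main2}, replacing the projective structure by its complex (c-projective) counterpart. The starting point is the standard fact from c-projective geometry that two K\"ahler metrics $g$ and $\hat g$ are c-projectively equivalent if and only if their Levi-Civita connections $\nabla$ and $\hat\nabla$ differ by a tensor of the special shape
\begin{equation*}
S(X,Y)=\hat\nabla_X Y-\nabla_X Y=\psi(X)Y+\psi(Y)X-\psi(JX)JY-\psi(JY)JX
\end{equation*}
for some $1$-form $\psi$. The ``only if'' direction is immediate, since $S(\dot\gamma,\dot\gamma)=2\psi(\dot\gamma)\dot\gamma-2\psi(J\dot\gamma)J\dot\gamma$ always lies in $\mathrm{span}(\dot\gamma,J\dot\gamma)$, so $J$-planarity of a curve is preserved when we pass from $\nabla$ to $\hat\nabla$; the ``if'' direction is the c-projective analogue of the classical Weyl argument identifying exactly which connection changes send $J$-planar curves to $J$-planar curves.

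First I would convert this statement about connections into a first-order equation on $\hat g$. Since $\nabla$ and $\hat\nabla$ are both torsion-free and metric (for $g$ and $\hat g$ respectively), comparing $\nabla\hat g$ with $\hat\nabla\hat g=0$ gives
\begin{equation*}
(\nabla_X\hat g)(Y,Z)=\hat g(S(X,Y),Z)+\hat g(Y,S(X,Z)),
\end{equation*}
and substituting the c-projective form of $S$ expresses $\nabla\hat g$ through $\psi$, $g$, $\hat g$ and $J$. Passing from $\hat g$ to the operator $A$ via $\hat g=(\det A)^{-1/2}gA^{-1}$ then rewrites this as a first-order PDE for $\nabla A$.

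The decisive step is the choice of normalisation. The exponent $\tfrac{1}{2(n+1)}$ in the definition of $A$ is designed precisely so that the differential of $\log\det\hat g$, which governs $\psi$, is matched against $\tr A$: computing $d\log\det\hat g$ in terms of $\tr(A^{-1}\nabla A)$ and combining it with the trace of the PDE just obtained shows that $\psi$ is a universal multiple of $d\tr A$. With this identification the PDE for $\nabla A$ collapses to the form $\nabla_u A=u\otimes d\tr A$, up to the terms forced by the algebraic symmetry type of $A$. This trace bookkeeping is the computational heart of the argument and the step I expect to be the main obstacle, since it is exactly here that the complex dimension $n$ and the factor $2(n+1)$ (rather than $2n+1$, as one would naively expect in real dimension $2n$) must be seen to arise from the extra $J\dot\gamma$ direction available to $J$-planar curves.

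Finally one must account for the symmetry type of $\nabla_u A$. Because $A$ is hermitean and, on a K\"ahler manifold, $\nabla g=0$ and $\nabla J=0$, the covariant derivative $\nabla_u A$ is again hermitean, whereas $u\otimes d\tr A$ is only a generic $(1,1)$-tensor; the correct right-hand side is therefore its orthogonal projection onto the space of hermitean operators, and checking that this projection reproduces the expression generated by the c-projective form of $S$ is what pins down $\mathrm{pr}_{\mathbb C}$. This is the exact complex analogue of the $g$-symmetrisation $\tfrac12(\,\cdot+\cdot^{\,*})$ in \eqref{main2}, which is itself nothing but projection onto $g$-symmetric operators. The converse direction runs the same computation backwards: starting from the PDE one recovers $S$ in c-projective form, hence $\hat\nabla$, and concludes that $g$ and $\hat g$ share their $J$-planar curves, i.e.\ are c-compatible.
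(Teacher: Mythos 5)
The paper offers no proof of this theorem for you to be compared against: the statement is imported from Domashev and Mikes \cite{DomMik}, with only the parenthetical hint ``cf.\ the compatibility condition \eqref{main2}''. Judged on its own, your outline is the standard derivation and, in essence, the original one. The Weyl-type characterisation you start from --- that two K\"ahler metrics on the same $(M,J)$ are c-projectively equivalent iff $S(X,Y)=\hat\nabla_XY-\nabla_XY=\psi(X)Y+\psi(Y)X-\psi(JX)JY-\psi(JY)JX$ --- is classical, and its nontrivial ``only if'' direction is legitimate here precisely because both Levi-Civita connections are torsion-free and preserve the same $J$, which you implicitly use. The chain $(\nabla_X\hat g)(Y,Z)=\hat g(S(X,Y),Z)+\hat g(Y,S(X,Z))$, the passage to $A$, and the identification of $\psi$ as a universal multiple of $d\,\tr A$ via the exponent $\tfrac{1}{2(n+1)}$ is the exact complex analogue of Sinjukov's derivation of \eqref{main2}, as you say; your explanation of why the exponent is $2(n+1)$ rather than $2n+1$ in real dimension $2n$ (the extra $J\dot\gamma$ direction) is also correct. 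So the route is right and matches what the cited source does; the paper itself simply quotes the result.

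Two points to watch when you carry out the computation. First, you correctly observe that $\nabla_uA$ is automatically hermitean ($\nabla g=0$, $\nabla J=0$, and $A$ hermitean), so the right-hand side must be the hermitean projection of $u\otimes d\,\tr A$; but take the paper's displayed formula $\mathrm{pr}_{\mathbb C}L=\frac{1}{4}(L+L^*+JLJ+JL^*J)$ with a grain of salt: with $J^2=-\mathrm{Id}$ and ``hermitean'' meaning $L^*=L$ and $LJ=JL$, the orthogonal projector is $\frac{1}{4}\bigl(L+L^*-JLJ-JL^*J\bigr)$, i.e.\ the conjugation must be read as $J^{-1}LJ=-JLJ$; checking that this projection reproduces the expression generated by $S$ is a finite but sign-sensitive verification. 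Second, your text is a plan rather than a proof: the trace bookkeeping you flag as the main obstacle is the entire computational content, and the characterisation of c-projective connection changes is invoked as a black box. Neither is a flaw in the approach --- both steps are standard and will go through --- but the argument is complete only once they are executed.
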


The explicit formula for $\mathrm{pr}_{\mathbb C}$ is as follows:
$
\mathrm{pr}_{\mathbb C} L = \frac{1}{4} ( L + L^* +  JLJ + JL^*J )
$.

As in the (pseudo)-Riemannian case discussed in Section~\ref{sect3},  the curvature tensor of a K\"ahler metric  $g$ can be naturally considered as an operator 
$$
R:  \uu(g,J) \to  \uu(g,J),
$$
where $\uu(g,J)$ is the unitary Lie algebra associated with the metric $g$ and complex structure $J$. It is a remarkable fact that if $g$ and $A$ are c-compatible, then $R$ satisfies the following relation
$$
[R(X),A]=[X,B]\quad\mbox{for all }X\in \uu(g,J),\label{eq:RicciId}
$$
where $B= \nabla \mathrm{grad}(\mathrm{tr}A)$.   In other words, $R$ is a sectional operator but in the sense of another Lie algebra, namely $\uu(g,J)$ instead of $\so(g)$.  After Theorem~\ref{BMK}, this property does not look very surprising. 
Here we discuss in brief  just one relatively small part of our paper  \cite{BMR} in order to explain how this property of R can be used in c-projective geometry.

The paper  \cite{BMR} concerns two problems:  local description of c-projectively equivalent metrics and Yano-Obata conjecture which states that essential c-projective vector fields\footnote{An essential c-projective vector  field is defined as a vector field whose flow preserves $J$-planar curves but changes the connection.} may exists on a compact K\"ahler manifold $M$ only in one very special case, namely, if  $M=\mathbb CP^n$ with the standard Fubini--Study metric. 

The proof of the Yano-Obata conjecture is based on our local description of c-projectively equivalent metrics but  the main issue is ``how to pass'' from {\it local}  explicit  formulas for $g_{ij}(x)$  (which become very special  if $g$ admits an essential  c-projective vector field) to {\it global} conclusions.   The main difficulty is that  $g_{ij}$ itself has no simple scalar invariants, like  e.g. eigenvalues.   However such invariants can be constructed from the curvature tensor.  Indeed, if we think of $R$ as an operator defined  on $\uu (g,J)$, then we can consider its eigenvalues as scalar functions on $M$.  Since $M$ is compact, these functions must be bounded and we may try to check this condition  by using our local formulas.  The next problem, however, is computational: how to find explicitly the eigenvalues of such a complicated tensor as $R$?    That is where the properties of sectional operators come into play.  Proposition \ref{prop6} (more precisely, its unitary analog proved in \cite{BMR})  gives a very simple formula for the eigenvalues.  Analysing  these eigenvalues (explicitly found by means of  Proposition \ref{prop6})  has been an important part of our proof.

As a conclusion,  just a few words about  further possible applications of  sectional operators.  As was pointed out in  Section~\ref{sect1},   sectional operators $R:\goth h \to \goth h$ can be naturally defined for any $\Z_2$-graded  Lie algebra $\goth g= \goth h + \goth v$.   The above discussion  shows that in the case $\goth g = \gl(n,\mathbb C)$ and $\goth h= \uu (p,q)$,  the corresponding sectional operators admit a very natural geometric interpretation.   What happens for other $\Z_2$-grading?  Do these operators relate to any interesting geometric structures?    

In his recent paper \cite{Boubel},  C.\,Boubel  has obtained a classification of covaraintly constant $(1,1)$-tensor fields not only on pseudo-Riemannian, but also on K\"ahler and hyper-K\"ahler manifolds of arbitrary signature.   Can we generalise formulas \eqref{rxmin1}, \eqref{rxmin2} and \eqref{r3}
to construct, in a similar way, examples of  K\"ahler and hyper-K\"ahler manifolds with holonomy algebras 
$\goth z_A \cap \uu (p,q)$ and $\goth z_A \cap \mathrm{sp}\, (p,q)$?

\end{document}